\documentclass[a4paper]{amsart}
\usepackage{hyperref}

\newtheorem{thm}[equation]{Theorem}

\newtheorem{prop}[equation]{Proposition}
\newtheorem{lemma}[equation]{Lemma}

\theoremstyle{remark}
\newtheorem{rema}[equation]{Remark}

\numberwithin{equation}{section}

\newcommand{\settc}[2]{\{\,#1 \bigm\vert #2\,\}}

\newcommand{\abs}[1]{\left\vert #1 \right\vert}

\newcommand{\norm}[1]{\lVert #1 \rVert}
\newcommand{\norma}[1]{\lvert #1 \rvert}

\newcommand{\tra}[1]{\gamma(#1)}

\title[pseudo-relativistic Hartree equations of critical type]{Ground
states for pseudo-relativistic Hartree equations of critical type}

\author{Vittorio Coti Zelati}
\email[Coti Zelati]{zelati@unina.it}
\address[Coti Zelati]{Dipartimento di Matematica Pura e Applicata 
``R.~Caccioppoli''\\
Universit\`a di Napoli ``Federico II''\\
via Cintia, M.S.~Angelo\\
80126 Napoli (NA), Italy}

\author{Margherita Nolasco}
\email[Nolasco]{nolasco@univaq.it}
\address[Nolasco]{Dipartimento di Matematica Pura e Applicata,
Universit\`a dell'Aquila
via Vetoio, Loc. Coppito	
67010 L'Aquila AQ
Italia}

\thanks{Work partially supported by the PRIN2009 grant ``Critical
Point Theory and Perturbative Methods for Nonlinear Differential
Equations''.}

\begin{document}

\begin{abstract}
We study the existence of ground state solutions for a class of non-
linear pseudo-relativistic Schr\"odinger equations with critical
two-body interactions.  Such equations are characterized by a nonlocal
pseudo-differential operator closely related to the square-root of the
Laplacian.  We investigate such a problem using variational methods
after transforming the problem to an elliptic equation with a
nonlinear Neumann boundary conditions.
\end{abstract}

\maketitle

\section{Introduction}

The relativistic Hamiltonian for $N$ identical particles of mass $m$,
position $x_{i}$ and momentum $p_{i}$ interacting through the two body
potential $\alpha W(\abs{x_{i} - x_{j}})$ is given by
\begin{equation*}
    \mathcal{H} = \sum_{i=1}^{N} \bigl(\sqrt{p_{i}^{2}c^{2} +
    m^{2}c^{4}} - mc^{2}\bigr) - \alpha \sum_{i \neq j} W(\abs{x_{i} -
    x_{j}}).
\end{equation*}
where $c$ is the speed of light and $\alpha >0$ is a coupling
constant.

According to the usual quantization rules the dynamics of the
corresponding system of $N$-identical quantum spinless particles (a
\emph{Bose gas}) is described by the complex wave function
$\Psi_N=\Psi_N(t, x_1, \dots, x_N)$ governed by the Schr\"odinger
equation
\begin{equation*}
    i \hbar \partial_{t} \Psi_N = \mathcal{H}_{N} \Psi_N
\end{equation*}
where $\hbar $ is the Planck's constant.  Here $\mathcal{H}_{N} \colon
\mathcal{D} \subset L^2(\mathbb{R}^3)^{\otimes_{s} N } \to
L^2(\mathbb{R}^3)^{\otimes_{s} N}$ is the \emph{quantum mechanics}
Hamiltonian operator, obtained from the classical Hamiltonian with the
usual quantization rule $p \mapsto -i\hbar\nabla$, and defined in a
suitable dense domain $\mathcal{D}$.  In the case we are interested in
$\mathcal{H}_{N}$ is
\begin{equation*}
    \mathcal{H}_{N} = (\sum_{j=1}^{N} \sqrt{-\hbar^{2} c^{2} \Delta_j
    + m^{2}c^{4}} - mc^2) - \alpha \sum_{i \not = j}^N W(|x_i - x_j|).
\end{equation*}
where $W$ is the multiplication operator corresponding to the two body
interaction potential, (e.g. $W(|x|) = |x|^{-1}$ for gravitational
interactions).

The operator (from now on we will take $\hbar = 1$, $c = 1$)
\begin{equation}
    \label{eq:radicelaplace}
    \sqrt{-\Delta + m^{2}}
\end{equation}
can be defined for all $f \in H^{1}(\mathbb{R}^{N})$ as the inverse
Fourier transform of the $L^{2}$ function $ \sqrt{\abs{k}^{2} + m^{2}}
\mathcal{F}[f] (k) $ (here $\mathcal{F}[f]$ denotes the Fourier
transform of $f$) and it is also associated to the quadratic form
\begin{equation*}
    \mathcal{Q}(f,g) = \int_{\mathbb{R}^{N}} \sqrt{ \abs{k}^{2} +
    m^{2}} \, \mathcal{F}[f] \mathcal{F}[g] \, dk
\end{equation*}
which can be extended to the space
\begin{equation*}
    H^{1/2}(\mathbb{R}^{N}) = \settc{ f \in L^{2}(\mathbb{R}^{N}) }{
    \int_{\mathbb{R}^{N}} |k| |\mathcal{F}[f](k)|^{2} \, dk < +
    \infty}
\end{equation*}
(see e.g. \cite{LiebLoss} for more details).

In the mean field limit approximation (i.e. $\alpha N \simeq O(1)$ as
$ N \to + \infty$) of a quantum relativistic Bose gas, one is lead to
study the nonlinear mean field equation --- called \emph{the
pseudo-relativistic Hartree equation} --- given by
\begin{equation}
    \label{eq:pseudorel_NLS}
    i \partial_{t}\psi = (\sqrt{- \Delta + m^{2}} - m ) \psi - ( W
    \ast |\psi|^{2}) \psi .
\end{equation}
where $\ast$ denotes convolution.  We will take attractive two body
interaction, and hence $W$ will always   be a nonnegative  function.  

See \cite{LiebYau87} for the study of this equation when $W$ is the
gravitational interaction, and \cite{ElgartSchlein07} for a rigorous
derivation of the mean field equation \eqref{eq:pseudorel_NLS} as a
limit as $N \to +\infty$ of the Schr\"odinger equation for $N$ quantum
particles, and \cite{DallAcquaSorensenOstergaardStockmeyer2008} for
more recent development on models involving the pseudorelativistic
operator $\sqrt{-\Delta + m^{2}}$.

It has recently been proved that for Newton or Yukawa-type two body
interactions (i.e. $W(|x|) = \abs{x}^{-1}$ or $\abs{x}^{-1}
e^{-\abs{x}}$ in $\mathbb{R}^{3}$) such an equation is locally
well-posed in $H^{s}$, $s \geq 1/2$, and that the solution is global
in time for small initial data in $L^{2}$ (see \cite{Lenzmann07}).
Blow up has been proved in \cite{FrohlichLenzmann07-1,
FrohlichLenzmann07}.

Due to the \emph{focusing} nature of the nonlinearity (attractive
two-body interaction) there exist \emph{solitary waves} solutions
given by
\begin{equation*}
    \psi(t,x) = \text{e}^{i \mu t} \varphi(x)
\end{equation*}
where $\varphi $ satisfy the nonlinear eigenvalue equation
\begin{equation}
    \label{eq:stationary}
    \sqrt{- \Delta + m^{2}} \varphi - m \varphi- ( W \ast |
    \varphi|^2) \varphi = - \mu \varphi .
\end{equation}
In \cite{LiebYau87} the existence of such solutions (in the case $W(x)
= \abs{x}^{-1}$) has been proved provided that $M < M_{c}$, $M_{c}$
being the \emph{Chandrasekhar limit mass}.  More precisely the Authors
have shown the existence in $H^{1/2}(\mathbb{R}^3)$ of a radial,
real-valued non negative minimizer (\emph{ground state}) of
\begin{equation}
    \label{eq:static_energy}
    \mathcal{E}[\psi] = \frac{1}{2} \int_{\mathbb{R}^{3}} \bar \psi (
    \sqrt{- \Delta + m^{2}} -m)\psi \, dx - \frac{1}{4}
    \int_{\mathbb{R}^{3}} ( |x|^{-1} \ast |\psi|^2) |\psi|^2 \, dx.
\end{equation}
with given fixed ``mass-charge'' $ M = \int_{\mathbb{R}^{3}} |\psi|^2
\, dx < M_{c}$.  We call mass-critical the potentials $W$ whose
associated functional $\mathcal{E}$ exhibits this kind of phenomenon.

More recently in \cite{FrohlichJonssonLenzmann07} it has been proved
that the ground state solution is regular ($H^{s}(\mathbb{R}^{3})$,
for all $s \geq 1/2$), strictly positive and that it decays
exponentially.  Moreover the solution is unique, at least for small
$L^{2}$ norm (\cite{Lenzmann09}).

Let us remark that these last results are heavily based on the
specific form (Newton or Yukawa type) of the two body
interactions in the Hartree nonlinearity.  Indeed in these cases the
estimates of the nonlinearity relies on the following facts
\begin{itemize}
    \item for this class of potentials one has that
    \begin{equation*}
	\frac{e^{-\mu\abs{x}}}{4\pi\abs{x}} * f = (\mu^{2} - \Delta)^{-1}f
	\quad \text{ for } f \in \mathcal{S}(\mathbb{R}^{3}), \ \mu \geq 0
    \end{equation*}

    \item the use of   a generalized Leibnitz rule for Riesz and
    Bessel potentials

    \item the following estimate holds
    \begin{equation*}
	\|\frac{1}{ |x|} \ast |u|^2\|_{L^{\infty}} \leq \frac{\pi}{2}
	\| (-\Delta)^{1/4} u \|^2_{L^{2}}.
    \end{equation*}
\end{itemize}

In \cite{CZNolasco2011} it has been proved an existence and regularity
result for the solutions of \eqref{eq:stationary} for a wider class of
nonlinearities
by exploiting the relation of equation
\eqref{eq:stationary} with an elliptic equation on
$\mathbb{R}^{N+1}_{+}$ with a nonlinear Neumann boundary condition.
Such a relation has been recently used to study several problems
involving fractional powers of the laplacian (see
e.g.~\cite{CabreMorales05} and references therein) and it is based on
an alternative definition of the operator \eqref{eq:radicelaplace}
that can be described as follows.  Given any function $u \in
\mathcal{S}(\mathbb{R}^{N})$ there is a unique function $v \in
\mathcal{S}(\mathbb{R}^{N+1}_{+})$ (here $\mathbb{R}^{N+1}_{+} =
\settc{(x,y) \in \mathbb{R} \times \mathbb{R}^{N}}{x > 0}$) such that
\begin{equation*}
    \begin{cases}
	-\Delta v + m^{2} v = 0 & \text{in } \mathbb{R}^{N+1}_{+} \\
	v(0,y) = u(y) & \text{for } y \in \mathbb{R}^{N} = \partial
	\mathbb{R}^{N+1}_{+}. \end{cases}
\end{equation*}
Setting 
\begin{equation*}
    Tu(y) = - \frac{\partial v}{\partial x}(0,y)
\end{equation*}
we have that the equation
\begin{equation*}
    \begin{cases}
	-\Delta w + m^{2}w =0 & \text{in } \mathbb{R}^{N+1}_{+} \\
	w(0,y) = Tu(y) = - \frac{\partial v}{\partial x}(0,y) &
	\text{for } y \in \mathbb{R}^{N}
    \end{cases}
\end{equation*}
has the solution $w(x,y) = -\frac{\partial v}{\partial x}(x,y)$.  From
this we have that
\begin{equation*}
    T(Tu)(y) = - \frac{\partial w}{\partial x}(0,y) =
    \frac{\partial^{2} v}{\partial x^{2}} (0,y) =( -\Delta_{y} v +
    m^{2} v)(0,y)
\end{equation*}
and hence $T^{2} = (-\Delta_{y} + m^{2})$.

In \cite{CZNolasco2011} we have studied the equation
\begin{equation}
    \sqrt{-\Delta + m^{2}} v = \mu v + \nu \abs{v}^{p-2}v + \sigma
    (W*\abs{v}^{2}) v \qquad \text{in } \mathbb{R}^{N}
    \label{eq:stazionaria}
\end{equation}
where $p\in (2, \frac{2N}{N-1})$, $\mu < m$ is fixed , $\nu$, $\sigma
\geq 0$ (but not equal $0$ both), $W \in L^{r}(\mathbb{R}^{N}) +
L^{\infty}(\mathbb{R}^{N})$, $W \geq 0$, $(r > N/2)$, $W(x) =
W(\abs{x}) \to 0$ as $\abs{x} \to+\infty$.

The results are obtained, following the approach outlined above, by
studying the following equivalent elliptic problem with nonlinear
boundary condition
\begin{equation}
\label{eq:Neumann_halfplane}
    \begin{cases}
	-\Delta v + m^{2}v =0 & \text{in } \mathbb{R}^{N+1}_{+} \\
	- \frac{\partial v}{\partial x} = \mu v + \nu \abs{v}^{p-2}v +
	\sigma (W*\abs{v}^{2}) v & \text{on } \mathbb{R}^{N} =
	\partial\mathbb{R}^{N+1}_{+}
    \end{cases}
\end{equation}
and the associated functional on $H^{1}(\mathbb{R}^{N+1}_{+})$.

Let us point out that in \cite{CZNolasco2011} the $L^{2}$ norm of the
solution is not prescribed.  In such a case existence of a (positive,
radially symmetric) solution can be proved for a certain class of
potentials $W$ and exponents $p$ which is larger then the one we can
deal with here.

When the $L^{2}$ norm is prescribed to be $M$ (the most relevant
problem from a physical point of view), as in \cite{LiebYau87}, then
the Newton potential ($\abs{x}^{-1}$ in $\mathbb{R}^{3}$) is critical,
in the sense that minimization of $\mathcal{E}$ given by
\eqref{eq:static_energy} is possible only when $M < M_{c}$
(see theorem \ref{thm:Theo}).

The main purpose of this paper is to exploit this approach also for
the problem of finding minimizer of the static energy
\begin{equation} 
    \label{eq:static-nostra}
    \mathcal{E}[u] = \frac{1}{2} \int_{\mathbb{R}^{N}} u ( \sqrt{-
    \Delta + m^{2}} -m)u \, dx + \frac{\eta}{p} \int_{\mathbb{R}^{N}}
    |u|^{p} \, dx - \frac{\sigma}{4} \int_{\mathbb{R}^{N}} (W \ast
    |u|^2) |u|^2 \, dx.  
\end{equation}
with prescribed $L^2$ norm, for a wider class of attractive two-body
potential including the critical case.

To be more precise, we consider a class of two body potential $W \in
L^q_{w}(\mathbb{R}^N)$, with $q \geq N$.  We recall that
$L^q_{w}(\mathbb{R}^N)$, the weak $L^{q}$ space, is the space of all
measurable functions $f$ such that
\begin{equation*}
    \sup_{\alpha >0 } \alpha | \settc{ x }{|f(x)| > \alpha}|^{1/q} < +
    \infty,
\end{equation*}
where $|E|$ denotes the Lebesgue measure of a set $E \subset
\mathbb{R}^N $.  Note that $W(x) = |x|^{-1}$ does not belong to any
$L^q$-space but it belongs to $ L^N_{w}(\mathbb{R}^N)$.  We say that a
potential $W$ is \emph{critical} if $W \in L^{N}(\mathbb{R}^{N})$.

Our main result is the following

\begin{thm}
\label{thm:Theo}
    Let $W \in L^{q}_{w}(\mathbb{R}^{N})$, $q \geq N \geq 2$, $W(y)
    \geq 0$ for all $y \in \mathbb{R}^{N}$ and such that
    \begin{equation}
	\label{eq:omogeneitaW}
	W(\lambda^{-1}y) \geq \lambda^{\alpha} W(y), \qquad \text{for
	all } \lambda \in (0,1) \text{ and for some } \alpha > 0.
    \end{equation}
    We also assume that $W(x) = W(\abs{x})$ is rotationally symmetric
    and that $W(r) \to 0$ as $r \to + \infty$.
    
    Take $\eta \geq 0$, $\sigma > 0$ and $p \in (2 + \frac{2}{q}, 2 +
    \frac{2}{N-1} = \frac{2N}{N-1}]$.  Then 
    \begin{itemize}
	\item if $\eta > 0$ or $\eta = 0$ and $q > N$, for all $M > 0$
	there is a strictly positive minimizer $u \in
	H^{1/2}(\mathbb{R}^{N})$ of $\mathcal{E}[u]$ such that
	$\int_{\mathbb{R}^{N}}u^{2} = M$.
    
	\item (\emph{mass-critical case}) if $\eta = 0$ and $q = N$
	there is a critical value $M_{c} > 0$ such that for all $0 < M
	< M_{c}$ there is a strictly positive minimizer $u \in
	H^{1/2}(\mathbb{R}^{N})$ of $\mathcal{E}[u]$ such that
	$\int_{\mathbb{R}^{N}}u^{2} = M$.
    \end{itemize}
    Moreover there exists $\mu >0$ such that $u$ is a smooth,
    exponentially decaying at infinity, solution of
    \begin{equation*}
	(\sqrt{-\Delta + m^{2}} - m)u = -\mu u - \eta \abs{u}^{p-2}u +
	\sigma (W * \abs{u}^{2})u \qquad \text{in} \ \mathbb{R}^{N}.
    \end{equation*}
    and $u$ is radial if $W=W(r)$ is a decreasing function of $r > 0$.
\end{thm}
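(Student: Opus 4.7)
The idea is to transport the half-space reformulation of \cite{CZNolasco2011} to the $L^{2}$-constrained problem. To $u\in H^{1/2}(\mathbb{R}^{N})$ associate its Poisson-type extension $v\in H^{1}(\mathbb{R}^{N+1}_{+})$ solving $-\Delta v+m^{2}v=0$ with $v(0,\cdot)=u$; the kinetic-plus-mass part of $\mathcal{E}$ then equals $\tfrac{1}{2}\int_{\mathbb{R}^{N+1}_{+}}(|\nabla v|^{2}+m^{2}v^{2})-\tfrac{m}{2}\int_{\mathbb{R}^{N}}v(0,\cdot)^{2}$, and the constrained minimisation becomes: minimise
\[
\mathcal{F}[v]=\tfrac{1}{2}\int_{\mathbb{R}^{N+1}_{+}}(|\nabla v|^{2}+m^{2}v^{2})-\tfrac{m}{2}\int_{\mathbb{R}^{N}}v(0,\cdot)^{2}+\tfrac{\eta}{p}\int_{\mathbb{R}^{N}}|v(0,\cdot)|^{p}-\tfrac{\sigma}{4}\int(W\ast|v(0,\cdot)|^{2})|v(0,\cdot)|^{2}
\]
over $\{v\in H^{1}(\mathbb{R}^{N+1}_{+}):\|v(0,\cdot)\|_{L^{2}}^{2}=M\}$, so that the nonlinearity is pushed to the boundary of a local linear problem.

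\textbf{Coercivity, the threshold $M_{c}$, and compactness.} Combining the trace embedding $H^{1}(\mathbb{R}^{N+1}_{+})\hookrightarrow H^{1/2}(\mathbb{R}^{N})\hookrightarrow L^{s}(\mathbb{R}^{N})$, $2\le s\le 2N/(N-1)$, the weak-type Young inequality $\|W\ast f\|_{L^{r}}\le C\|W\|_{L^{q}_{w}}\|f\|_{L^{t}}$, and H\"older, the Hartree term is bounded by a constant multiple of $\|v(0,\cdot)\|_{L^{s_{0}}}^{4}$ for an explicit $s_{0}=s_{0}(q,N)$. For $q>N$ this $s_{0}$ is strictly subcritical, so interpolation against the fixed $L^{2}$ mass gives a sub-quadratic bound in $\|v\|_{H^{1}_{+}}$ and $\mathcal{F}$ is coercive for every $M>0$; when $\eta>0$, the condition $p>2+\tfrac{2}{q}$ guarantees that the $L^{p}$ term is mass-supercritical and likewise dominated. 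In the critical case $q=N$, $\eta=0$ the interpolation is exactly sharp and a Hardy--Littlewood--Sobolev inequality of the form $\int(W\ast u^{2})u^{2}\le C_{\ast}\|u\|_{L^{2}}^{2}\|u\|_{\dot H^{1/2}}^{2}$ pins down the threshold $M_{c}:=2/(\sigma C_{\ast})$, in analogy with \cite{LiebYau87}. Given a minimising sequence $v_{n}$, replace $u_{n}:=v_{n}(0,\cdot)$ by its symmetric decreasing rearrangement and re-extend harmonically: P\'olya--Szeg\H{o} for Bessel potentials and Riesz's rearrangement inequality (the latter applied through the usual density reduction since $W$ is only assumed rotationally symmetric) do not increase $\mathcal{F}$, so one may take $u_{n}$ radial and decreasing. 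The compact Strauss embedding $H^{1/2}_{\mathrm{rad}}\hookrightarrow L^{s}$, $2<s<2N/(N-1)$, then yields strong subcritical convergence. To close the $L^{2}$ constraint one rules out vanishing via a test function showing $I(M):=\inf\mathcal{F}<0$, and dichotomy via the strict subadditivity $I(\theta M)<\theta I(M)$ for $\theta>1$, whose proof uses the homogeneity hypothesis \eqref{eq:omogeneitaW} on scaled trial functions.

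\textbf{Conclusion and main obstacle.} Once a minimiser $u$ is found, the Lagrange multiplier rule supplies $\mu\in\mathbb{R}$, and testing the Euler--Lagrange equation with $u$ together with $\mathcal{F}[u]<0$ force $\mu>0$. Strict positivity follows by replacing $u$ with $|u|$ (all four energy terms are even in $u$) and invoking the strong maximum principle and Hopf lemma for the extension $v$. Smoothness and exponential decay are then obtained by a Bessel-potential bootstrap and comparison with the fundamental solution of $\sqrt{-\Delta+m^{2}}-(m-\mu)$, exactly as in \cite{CZNolasco2011}; radial symmetry when $W=W(r)$ is strictly decreasing comes from the equality case of Riesz's inequality. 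The technical heart of the argument is the critical regime $q=N$, $\eta=0$, $p=2N/(N-1)$: one must locate $M_{c}$ sharply, establish strict subadditivity without the cushion of a subcritical $L^{p}$ term, and rule out concentration of mass at a single point along the minimising sequence below $M_{c}$. The one-sided homogeneity \eqref{eq:omogeneitaW} --- which admits nonlinearities beyond the purely homogeneous case --- is precisely what makes the scaling arguments for subadditivity and the control of concentration go through.
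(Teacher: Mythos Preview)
Your overall architecture --- harmonic extension, coercivity with a threshold $M_{c}$ when $q=N$, the test-function argument for $I(M)<0$, strict subadditivity, Lagrange multiplier with $\mu>0$, and the appeal to \cite{CZNolasco2011} for regularity and decay --- matches the paper. The substantive divergence is in the compactness step, and there your route has a gap.

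You propose to pass to a radial decreasing minimising sequence by symmetric rearrangement and then invoke the Strauss embedding. For the Hartree term this requires
\[
\int_{\mathbb{R}^{N}} (W*|u|^{2})|u|^{2}\,dy \;\le\; \int_{\mathbb{R}^{N}} (W*|u^{*}|^{2})|u^{*}|^{2}\,dy,
\]
which follows from Riesz's rearrangement inequality \emph{only when} $W=W^{*}$, i.e.\ when $W$ is radially nonincreasing. The theorem, however, assumes merely that $W$ is radial with $W(r)\to 0$; monotonicity of $W$ is an \emph{additional} hypothesis used only for the final radiality conclusion. Your parenthetical ``density reduction'' does not repair this: approximating $W$ by smooth functions does not produce the needed monotonicity, and for non-monotone radial $W$ the inequality above can genuinely fail. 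Consequently you cannot, in the general case, reduce to radial minimising sequences, and the Strauss compactness is unavailable. The paper instead runs Lions' concentration--compactness directly on the (non-radial) sequence, ruling out vanishing via a decomposition $W=(W-W_{\delta})+W^{R}_{\delta}+\Gamma^{R}_{\delta}$ and dichotomy via a Brezis--Lieb--type splitting of the Hartree term (Appendix~A).

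Two smaller remarks. First, the homogeneity hypothesis \eqref{eq:omogeneitaW} is used in the paper to prove $I(M)<0$ (by scaling a fixed test function with $\lambda\to 0$ and comparing exponents $\alpha<N(\tfrac{p}{2}-1)<2$), \emph{not} for the strict subadditivity $I(\theta M)<\theta I(M)$; the latter follows purely from $\mathcal{I}(\lambda v)\le \lambda^{4}\mathcal{I}(v)$ for $\lambda>1$ together with $I(M)<0$, with no scaling of the spatial variable and no use of \eqref{eq:omogeneitaW}. Second, your argument for $\mu>0$ is on the right track but incomplete as stated: testing the Euler--Lagrange identity against $u$ and using $\mathcal{I}(u)<0$ yields $\mu M > \eta(\tfrac{4}{p}-1)\int|\gamma(u)|^{p}$, and one still needs $p\le 4$ (which holds since $p\le 2N/(N-1)\le 4$ for $N\ge 2$) to conclude.
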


\begin{rema}
    The nonlinear term $\abs{u}^{p-2}u$ is a defocusing nonlinearity,
    the convolution term is a focusing nonlinearity.  An open problem
    is to understand if solitons exists also for other ranges of $p$,
    in particular for $2 < p \leq 2 + \frac{2}{q}$ and $W \in
    L^{q}_{w}$.
\end{rema}
 
\begin{rema}  
    If $W \in L^{q}_{w}$ and \eqref{eq:omogeneitaW} holds for some
    $\alpha > 0$, then necessarily $\alpha \in (0, N/q]$.  If $W(x) =
    \abs{x}^{-\alpha}$, then $W \in L^{q}_{w}$ if and only if $\alpha
    = N/q$.
\end{rema}

\begin{rema}
    $\mu$ is a Lagrange multiplier.
\end{rema}

\section{Preliminaries }

Let $(x,y) \in \mathbb{R} \times \mathbb{R}^{N}$.  We have already
introduced $\mathbb{R}^{N+1}_{+} = \settc{(x,y) \in
\mathbb{R}^{N+1}}{x > 0}$.
With $\norm{u}_{p}$ we will always denote the norm of $u \in
L^{p}(\mathbb{R}^{N+1}_{+})$, with $\norm{u}$ the norm of $u \in
H^{1}(\mathbb{R}^{N+1}_{+})$ and with $\norma{v}_{p}$ the norm of $v
\in L^{p}(\mathbb{R}^{N})$.

We recall that for all $v \in H^{1}(\mathbb{R}^{N+1}) \cap
C^{\infty}_{0}(\mathbb{R}^{N+1})$
\begin{multline*}
    \label{eq:disProdottoa}
    \int_{\mathbb{R}^{N}} \abs{v(0,y)}^{p} dy = \int_{\mathbb{R}^{N}}
    dy \int_{+\infty}^{0} \frac{\partial \hfil}{\partial x}
    \abs{v(x,y)}^{p} dx \\
    \leq p \iint_{\mathbb{R}^{N+1}_{+}}
    \abs{v(x,y)}^{p-1}\abs{\frac{\partial v}{\partial x}(x,y)} \, dx
    \, dy \\
    \leq p \left(\iint_{\mathbb{R}^{N+1}_{+}} \abs{v(x,y)}^{2(p-1)} dx
    \, dy \right)^{1/2} \left(\iint_{\mathbb{R}^{N+1}_{+}}
    \abs{\frac{\partial v}{\partial x}(x,y)}^2 dx \, dy \right)^{1/2}
\end{multline*}
that is
\begin{equation}
    \norma{v(0,\cdot)}^{p}_{p} \leq p \norm{v}_{2(p-1)}^{p-1}
    \norm{\frac{\partial v}{\partial x}}_{2},
    \label{eq:disProdotto}
\end{equation}
which, by Sobolev embedding, is finite for all $2 \leq 2(p-1) \leq
2(N+1)/((N+1) - 2)$, that is $2 \leq p \leq 2^{\sharp}$, where we have
set $2^{\sharp} = 2N/(N-1)$.  By density of $H^{1}(\mathbb{R}^{N+1})
\cap C^{\infty}_{0}(\mathbb{R}^{N+1})$ in
$H^{1}(\mathbb{R}^{N+1}_{+})$ such an estimates allows us to define
the trace $\tra{v}$ of $v$ for all the functions $v \in
H^{1}(\mathbb{R}^{N+1}_{+})$.  The inequality
\begin{equation}
    \label{eq:ultimaLabel}
    \norma{\tra{v}}^{p}_{p} \leq p \norm{v}_{2(p-1)}^{p-1}
    \norm{\frac{\partial v}{\partial x}}_{2},
\end{equation}
holds then for all $v \in H^{1}(\mathbb{R}^{N+1}_{+})$.

It is known that the traces of functions in
$H^{1}(\mathbb{R}^{N+1}_{+})$ belong to $H^{1/2}(\mathbb{R}^{N})$ and
that every function in $H^{1/2}(\mathbb{R}^{N})$ is the trace of a
function in $H^{1}(\mathbb{R}^{N+1}_{+})$.  Then
\eqref{eq:ultimaLabel} is in fact equivalent to the well known fact
that $\tra{v} \in H^{1/2}(\mathbb{R}^{N}) \hookrightarrow
L^{q}(\mathbb{R}^{N})$ provided $q \in [2, 2^{\sharp}]$.  We also
recall here that
\begin{equation*}
    \norm{w}_{H^{1/2}}^{2} = \inf \settc{\norm{u}^{2}}{u \in
    H^{1}(\mathbb{R}^{N+1}_{+}), \ \tra{u} = w} =
    \int_{\mathbb{R}^{N}} (1 + \abs{\xi})\abs{\mathcal{F}w(\xi)}^{2}
    \, d\xi.
\end{equation*}

Let us also introduce the norm of the weak $L^q$-space as follows
\begin{equation*}
    \| f \|_{q,w} = \sup_{A} |A| ^{- 1/r} \int_{A} |f(x)| \, dx
\end{equation*}
where $1/q + 1/r = 1$ and $A$ denotes any measurable set of finite
measure $|A|$ (see e.g. \cite{LiebLoss} for more details).  Now using
this norm we can state the {\it weak Young inequality}.  If $g \in
L^q_w(\mathbb{R}^N)$, $f \in L^p(\mathbb{R}^N)$ and $h \in
L^r(\mathbb{R}^N)$ where $1 < q, p, r < + \infty$ and $1/q + 1/p + 1/
r = 2 $ then
\begin{equation}
    \label{eq:weakYang}
    \int_{\mathbb{R}^N}\int_{\mathbb{R}^N} f(y) g(y-z) h(y) \, dy \,
    dz \leq C_{p,q,r} \|g\|_{q, w} |f|_p |h|_r.
\end{equation}

We consider the class of two-body interactions $W \in
L^q_w(\mathbb{R}^{N})$ for $q \geq N$.  By weak Young inequality and
H\"older inequality we have for $r= 4q/(2q-1)$ ($\in (2,2^{\sharp})$
since $q \geq N$) and for all $p \in (4q/(2q-1), 2^{\sharp}]$
\begin{equation}
    \label{eq:stima_conv}
    \int_{\mathbb{R}^{N}} ( W* \abs{u}^2) \abs{w}^2 \, dy \leq C
    \norm{W}_{q, w} \abs{w}^{4}_r \leq C \norm{W}_{q, w} \abs{w}^{4 -
    \frac{2p}{q(p-2)}}_2 \abs{w}^{\frac{2p}{q(p-2)}}_p.
\end{equation}
For $p = 2^{\sharp}$ we get 
\begin{equation}
    \label{eq:stima_conv_sharp}
    \int_{\mathbb{R}^{N}} ( W* \abs{w}^2) \abs{w}^2 \, dy \leq C
    \norm{W}_{q, w} \abs{w}^{4 - \frac{2N}{q}}_2
    \abs{w}^{\frac{2N}{q}}_{2^{\sharp}}.
\end{equation}
In the (critical) case $q = N$ this gives
\begin{equation}
    \label{eq:stima_conv_critic}
    \int_{\mathbb{R}^{N}} ( W* \abs{w}^2) \abs{w}^2 \, dy \leq C
    \norm{W}_{N, w} \abs{w}^{2}_2 \abs{w}^{2}_{2^{\sharp}}.
\end{equation}
Let us point out that one cannot deduce \eqref{eq:stima_conv_critic}
from the weak Young's inequality \eqref{eq:weakYang} directly, and
that it is not true, in general, that $L^{\infty}$ norm of
$W*\abs{u}^{2}$ can be bounded by the $L^{2^{\sharp}}$ norm of $u$ if
$W \in L^{N}_{w}$.

For all $v \in H^{1}(\mathbb{R}_{+}^{N+1})$, we consider the
functional given by
\begin{multline*}
    \mathcal{I}(v) = \frac{1}{2} \bigl( \iint_{\mathbb{R}^{N+1}_{+}}
    (\abs{\nabla v}^{2} + m^{2} \abs{v}^{2}) \, dx\, dy -
    \int_{\mathbb{R}^{N}} m \abs{\tra{v}}^{2} \, dy \bigr) \\
    + \frac{\eta}{p} \int_{\mathbb{R}^{N}} \abs{\tra{v}}^{p} \, dy -
    \frac{\sigma}{4} \int_{\mathbb{R}^{N}} (W \ast
    \abs{\tra{v}}^{2})\abs{\tra{v}}^{2} \, dy.
\end{multline*}
In view of \eqref{eq:ultimaLabel} and \eqref{eq:stima_conv} all the
terms in the functional $\mathcal{I}$ are well defined if $p \in (2,
2^{\sharp}]$ and $W \in L^q_w(\mathbb{R}^{N})$ with $q \geq N$.

Remark that from \eqref{eq:disProdotto} with $p = 2$ follows that
\begin{equation}
    m\int_{\mathbb{R}^{N}} |\tra{v}|^{2} \, dy \leq
    2(m\norm{v}_{2})\norm{\nabla v}_{2} \leq
    \iint_{\mathbb{R}^{N+1}_{+}} (\abs{\nabla v}^{2} +
    m^{2}\abs{v}^{2}) \, dx\, dy
    \label{eq:positivita_quadratica}
\end{equation}
showing that the quadratic part in the functional $\mathcal{I}$ is
nonnegative.

Moreover the following property can be easily verified
\begin{lemma}
    \label{lem:betterminimizer}
    For $u \in H^{1}(\mathbb{R}^{N+1}_{+})$, let $w = \tra{u} \in
    H^{1/2}(\mathbb{R}^{N})$, $\hat{w} = \mathcal{F}(w)$ and
    \begin{equation*}
	v(x,y) = \mathcal{F}^{-1}(e^{-x\sqrt{m^{2}+\abs{\cdot}^{2}}}
	\hat{w}) = \int_{\mathbb{R}^{N}}
	e^{-x\sqrt{m^{2}+\abs{\xi}^{2}}} \hat{w}(\xi) e^{i\xi y} \,
	d\xi.
    \end{equation*}
    
    Then $v \in H^{1}(\mathbb{R}^{N+1}_{+})$, $\norm{v} =
    \norm{w}_{H^{1/2}}$ , $\mathcal{I}(v) \leq \mathcal{I}(u)$ and
    $\mathcal{I}(v) = \mathcal{E}[w]$.
\end{lemma}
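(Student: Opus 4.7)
The plan is to recognize $v$ as the Fourier-side representation of the unique solution to $-\Delta V + m^{2} V = 0$ in $\mathbb{R}^{N+1}_{+}$ with Dirichlet trace $\tra{V} = w$ on $\partial\mathbb{R}^{N+1}_{+}$; then Plancherel in the $y$ variable together with one integration by parts produce all four assertions.

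First I would compute $\norm{v}$ via Plancherel. Setting $a(\xi) = \sqrt{m^{2}+\abs{\xi}^{2}}$, the partial Fourier transform of $v$ in $y$ is $e^{-xa(\xi)}\hat{w}(\xi)$, and applying Plancherel in $y$ combined with $\int_{0}^{+\infty} e^{-2xa}\,dx = 1/(2a)$ yields
\begin{equation*}
    \iint_{\mathbb{R}^{N+1}_{+}}(\abs{\nabla v}^{2} + m^{2}\abs{v}^{2})\,dx\,dy = \int_{\mathbb{R}^{N}} \frac{a(\xi)^{2} + \abs{\xi}^{2} + m^{2}}{2a(\xi)}\abs{\hat{w}(\xi)}^{2}\,d\xi = \int_{\mathbb{R}^{N}} a(\xi)\abs{\hat{w}(\xi)}^{2}\,d\xi,
\end{equation*}
which is finite because $\hat{w} \in H^{1/2}$; an analogous computation bounds $\iint\abs{v}^{2}$. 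Hence $v \in H^{1}(\mathbb{R}^{N+1}_{+})$, and setting $x = 0$ in the Fourier definition gives $\tra{v} = w$ at once. The identity $\norm{v} = \norm{w}_{H^{1/2}}$ then amounts to observing that $v$ realizes the infimum in the definition of the $H^{1/2}$-norm recalled just before the lemma.

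For the minimization, the identity $a(\xi)^{2} - \abs{\xi}^{2} = m^{2}$ says that $v$ solves $-\Delta v + m^{2}v = 0$ weakly. Letting $\phi = u - v$ (so $\tra{\phi} = 0$) and integrating by parts after density approximation by compactly supported smooth functions,
\begin{equation*}
    \iint_{\mathbb{R}^{N+1}_{+}}(\nabla v \cdot \nabla \phi + m^{2} v\phi)\,dx\,dy = -\int_{\mathbb{R}^{N}} \frac{\partial v}{\partial x}(0,y)\,\tra{\phi}(y)\,dy = 0,
\end{equation*}
which expands into
\begin{equation*}
    \iint(\abs{\nabla u}^{2}+m^{2}\abs{u}^{2}) = \iint(\abs{\nabla v}^{2}+m^{2}\abs{v}^{2}) + \iint(\abs{\nabla \phi}^{2}+m^{2}\abs{\phi}^{2}).
\end{equation*}
Since $\tra{u} = \tra{v} = w$, the $L^{2}$, $L^{p}$ and convolution boundary terms of $\mathcal{I}(u)$ and $\mathcal{I}(v)$ coincide, so $\mathcal{I}(u) - \mathcal{I}(v) = \tfrac{1}{2}\iint(\abs{\nabla \phi}^{2}+m^{2}\abs{\phi}^{2}) \geq 0$.

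Finally, for the identification $\mathcal{I}(v) = \mathcal{E}[w]$, the spectral definition of $\sqrt{-\Delta+m^{2}}$ via Fourier gives $\int_{\mathbb{R}^{N}} w\sqrt{-\Delta+m^{2}}\,w\,dy = \int a(\xi)\abs{\hat{w}(\xi)}^{2}\,d\xi$, and together with the computation of the first step and $\tra{v} = w$ this produces the termwise match of $\mathcal{I}(v)$ with $\mathcal{E}[w]$. I do not expect a real obstacle anywhere; the only care required is with the Fourier conventions in force and with the standard justification of integration by parts on the half-space for $H^{1}$ functions.
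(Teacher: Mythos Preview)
Your argument is correct and complete. The paper itself provides no proof of this lemma (it merely says ``the following property can be easily verified''), so your proposal supplies exactly the standard verification: Plancherel in the tangential variable to compute the Dirichlet integral of the extension, and the $H^{1}$--$H^{1}_{0}$ orthogonality coming from the weak equation $-\Delta v + m^{2}v = 0$ to obtain $\mathcal{I}(v)\le\mathcal{I}(u)$.

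One cosmetic point worth flagging: for the literal equality $\norm{v} = \norm{w}_{H^{1/2}}$ to hold you must read $\norm{u}^{2}$ as $\iint(|\nabla u|^{2}+m^{2}|u|^{2})$ and the $H^{1/2}$ norm as $\int\sqrt{m^{2}+|\xi|^{2}}\,|\hat w|^{2}$, rather than the $m=1$ versions written in the paragraph preceding the lemma. With the paper's stated conventions (coefficient $1$ and weight $1+|\xi|$) the minimizing extension would be the one with $m=1$, not the $v$ of the lemma. This is an imprecision in the paper, not in your proof; all the norms in play are equivalent and none of the subsequent applications depends on the exact constants, so nothing downstream is affected.
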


\section{Minimization problem}

We consider the following minimization problem
\begin{equation}
    \label{eq:min_pb}
    I(M) = \inf \settc{ \mathcal{I}(v)}{v \in \mathcal{M}_{M} }
\end{equation}
where the manifold $\mathcal{M}_{M}$ is given by
\begin{equation*}
    \mathcal{M}_{M} = \settc{ v \in H^{1}(\mathbb{R}_{+}^{N+1})
    }{\int_{\mathbb{R}^{N}} \abs{\tra{v}}^{2} =M}
\end{equation*}

\begin{rema}
    The term $m\int_{\mathbb{R}^{N}} |\tra{v}|^{2}$ in the functional
    $\mathcal{I}(v)$ is constant for all $v \in \mathcal{M}_{M}$.  The
    presence of such a term will allow us to show that the infimum of
    the functional $\mathcal{I}$ on $ \mathcal{M}_{M}$ is negative.
\end{rema}

Concerning the existence of a minimizer for problem \eqref{eq:min_pb}
we start by proving, in the following lemmas, boundedness from below
of functional $\mathcal{I}$ on $\mathcal{M}_{M}$ and some properties
of the infimum $I(M)$.

\begin{lemma}
    \label{lem:coercitivita}
    The functional $\mathcal{I}$ is bounded from below and coercive on
    $\mathcal{M}_{M} \subset H^{1}(\mathbb{R}^{N+1}_{+})$ for all $M >
    0$ if $\eta > 0$ or $q > N$ and for all $M$ small enough if $\eta
    = 0$ and $q = N$.
\end{lemma}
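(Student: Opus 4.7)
The plan is to write, for $v \in \mathcal{M}_M$,
\begin{equation*}
\mathcal{I}(v) = \tfrac{1}{2}\|v\|^{2} - \tfrac{mM}{2} + \tfrac{\eta}{p}\norma{\tra{v}}_{p}^{p} - \tfrac{\sigma}{4}\int_{\mathbb{R}^{N}}(W*\absbig{\tra{v}}^{2})\absbig{\tra{v}}^{2}\,dy,
\end{equation*}
and bound the (negative) convolution term by the quadratic part plus the $L^p$ term (when $\eta>0$), using the estimates \eqref{eq:stima_conv}--\eqref{eq:stima_conv_critic}, the trace embedding $H^{1/2}(\mathbb{R}^N)\hookrightarrow L^{2^{\sharp}}(\mathbb{R}^N)$, and the fact that $\int|\tra{v}|^{2}=M$ is fixed on $\mathcal{M}_{M}$. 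I will treat the three cases separately.

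First, if $\eta>0$ and $p>2+2/q$, I would apply \eqref{eq:stima_conv} to get
\begin{equation*}
\int(W*\absbig{\tra{v}}^{2})\absbig{\tra{v}}^{2} \le C\norm{W}_{q,w} M^{2-\frac{p}{q(p-2)}}\,\norma{\tra{v}}_{p}^{\frac{2p}{q(p-2)}}.
\end{equation*}
Since $p>2+2/q$ we have $\frac{2p}{q(p-2)}<p$, so Young's inequality lets me absorb this into $\frac{\eta}{p}\norma{\tra{v}}_{p}^{p}$ at the cost of a constant depending on $M$, $\sigma$, $\eta$, $\norm{W}_{q,w}$. What is left is $\tfrac{1}{2}\|v\|^{2}$ minus a constant, which is bounded below and coercive in $H^{1}(\mathbb{R}^{N+1}_{+})$.

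Next, if $\eta=0$ and $q>N$, I would use \eqref{eq:stima_conv_sharp} together with the Sobolev trace inequality $\norma{\tra{v}}_{2^{\sharp}}\le S\,\|v\|$ to get
\begin{equation*}
\int(W*\absbig{\tra{v}}^{2})\absbig{\tra{v}}^{2} \le C\norm{W}_{q,w} M^{2-\frac{N}{q}}\,S^{\frac{N}{q}}\,\|v\|^{\frac{2N}{q}}.
\end{equation*}
Because $q>N$ gives $\frac{2N}{q}<2$, a further Young's inequality absorbs this term into $\tfrac{1}{2}\|v\|^{2}$, and coercivity and boundedness from below follow for every $M>0$.

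Finally, in the mass-critical case $\eta=0$, $q=N$, estimate \eqref{eq:stima_conv_critic} together with the trace Sobolev inequality gives
\begin{equation*}
\tfrac{\sigma}{4}\int(W*\absbig{\tra{v}}^{2})\absbig{\tra{v}}^{2} \le \tfrac{\sigma}{4} C\,\norm{W}_{N,w}\,S\,M\,\|v\|^{2},
\end{equation*}
so that $\mathcal{I}(v)\ge \bigl(\tfrac{1}{2}-\tfrac{\sigma C S\norm{W}_{N,w}}{4}M\bigr)\|v\|^{2}-\tfrac{mM}{2}$. Thus, setting $M_{0}:=2/(\sigma CS\norm{W}_{N,w})$, the functional is bounded from below and coercive for every $M<M_{0}$. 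The only real obstacle is this last case, where the linear-in-$M$ prefactor cannot be absorbed by any other term of $\mathcal{I}$ and the smallness condition on $M$ is genuinely necessary; it is precisely the presence of a critical $M_{c}$ in Theorem~\ref{thm:Theo}.
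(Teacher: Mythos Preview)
Your proof is correct and follows essentially the same approach as the paper's: estimate the convolution term via \eqref{eq:stima_conv} or \eqref{eq:stima_conv_sharp}, then absorb it into the $L^{p}$ term (when $\eta>0$) or into the $H^{1}$ quadratic part (when $\eta=0$) according to whether the resulting exponent is below $p$ or below $2$. The only cosmetic slips are the powers of $S$ in the last two displays (they should be $S^{2N/q}$ and $S^{2}$, respectively), which do not affect the argument.
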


\begin{proof}
    Let us examine first the convolution term. 
    
    If $\eta > 0$, from \eqref{eq:stima_conv} and
    $\abs{\tra{u}}_{2}^{2} = M$ we have
    \begin{multline}
	\label{eq:termineconvoluzione}
	0 \leq \int_{\mathbb{R}^{N}} (W*\abs{\tra{u}}^2)
	\abs{\tra{u}}^2 \leq C \norm{W}_{q, w} \abs{\tra{u}}^{4 -
	\frac{2p}{q(p-2)}}_2 \abs{\tra{u}}^{\frac{2p}{q(p-2)}}_p \\
	= C
	\norm{W}_{q, w} M^{2 - \frac{p}{q(p-2)}}
	\abs{\tra{u}}^{\frac{2p}{q(p-2)}}_p.
    \end{multline}
    
    Since $\frac{2p}{q(p-2)} < p$ by assumption, this is enough to
    prove coercivity if $\eta > 0$.  Indeed we have in such a case
    that
    \begin{equation*}
	\mathcal{I}(u) \geq \frac{1}{2}\norm{u}^{2} -\frac{1}{2} mM +
	C_{1}\abs{\tra{u}}_{p}^{p} - C_{2}
	\abs{\tra{u}}^{\frac{2p}{q(p-2)}}_p \geq \frac{1}{2}
	\norm{u}^{2} - C_{3}.
    \end{equation*}
    
    In case $\eta = 0$ we  deduce from \eqref{eq:stima_conv_critic} 
    and
    $\abs{\tra{u}}_{2^{\sharp}} \leq C\norm{u}$ that
    \begin{equation*}
	\mathcal{I}(u) \geq \norm{u}^{2} - mM - 
	C\norm{W}_{q,w}M^{2-N/q}\norm{u}^{2N/q}
    \end{equation*}
    It is then clear that the functional is bounded below and coercive
    whenever $q > N$ and, in case $q = N$, if $\norm{W}_{N,w}M$ is
    small enough.
\end{proof}

\begin{lemma}
    $I(M) < 0$ for all $M > 0$.
\end{lemma}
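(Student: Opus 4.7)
The plan is to use Lemma \ref{lem:betterminimizer} to reduce the problem to the boundary: it suffices to find a single $w \in H^{1/2}(\mathbb{R}^{N})$ with $\norma{w}_{2}^{2} = M$ and $\mathcal{E}[w] < 0$, because then the harmonic extension $v$ of $w$ lies in $\mathcal{M}_{M}$ and satisfies $\mathcal{I}(v) = \mathcal{E}[w] < 0$. Working at the boundary is essential: the most natural bulk scaling $v_{\lambda}(x,y) = \lambda^{N/2} v(\lambda x, \lambda y)$ sends $m^{2}\norm{v_{\lambda}}_{2}^{2}$ to $m^{2}\lambda^{-1}\norm{v}_{2}^{2}$, which blows up as $\lambda \to 0^{+}$, so such a scaling at the bulk level cannot directly yield a negative value.

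Fix a nonzero $w_{0} \in \mathcal{S}(\mathbb{R}^{N})$ with $\norma{w_{0}}_{2}^{2} = M$ and $\int_{\mathbb{R}^{N}}(W*\abs{w_{0}}^{2})\abs{w_{0}}^{2} > 0$ (the latter being possible since $W \geq 0$ is non-trivial), and for $\lambda \in (0,1)$ define the $L^{2}$-preserving rescaling $w_{\lambda}(y) = \lambda^{N/2} w_{0}(\lambda y)$. I would then check the scaling of each term of $\mathcal{E}[w_{\lambda}]$: by Plancherel and the elementary bound $\sqrt{m^{2} + t} - m \leq t/(2m)$ for $t \geq 0$, the quadratic part is at most $(\lambda^{2}/(4m))\int\abs{\nabla w_{0}}^{2}$; the defocusing term equals $(\eta/p)\lambda^{N(p-2)/2}\int\abs{w_{0}}^{p}$; and for the focusing term, the change of variables $(y',z') = (\lambda y, \lambda z)$ together with the homogeneity assumption \eqref{eq:omogeneitaW} yields
\begin{equation*}
    \int(W*\abs{w_{\lambda}}^{2})\abs{w_{\lambda}}^{2}\,dy = \iint W(\lambda^{-1}(y'-z'))\abs{w_{0}(y')}^{2}\abs{w_{0}(z')}^{2}\,dy'\,dz' \geq \lambda^{\alpha}\int(W*\abs{w_{0}}^{2})\abs{w_{0}}^{2}.
\end{equation*}

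Combining these bounds gives
\begin{equation*}
    \mathcal{E}[w_{\lambda}] \leq \frac{\lambda^{2}}{4m}\int\abs{\nabla w_{0}}^{2} + \frac{\eta}{p}\lambda^{N(p-2)/2}\int\abs{w_{0}}^{p} - \frac{\sigma}{4}\lambda^{\alpha}\int(W*\abs{w_{0}}^{2})\abs{w_{0}}^{2}.
\end{equation*}
By the remark following Theorem \ref{thm:Theo}, $\alpha \in (0, N/q]$, and since $q \geq N \geq 2$ this gives $\alpha \leq 1 < 2$; moreover, since $p > 2 + 2/q$, one has $N(p-2)/2 > N/q \geq \alpha$. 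The (strictly negative) focusing contribution therefore dominates both positive terms as $\lambda \to 0^{+}$, so $\mathcal{E}[w_{\lambda}] < 0$ for all sufficiently small $\lambda$, giving $I(M) < 0$. The only non-routine step is precisely this exponent comparison, which reveals the role of the hypotheses $\alpha \leq N/q$ and $p > 2 + 2/q$: they are tuned exactly so that $\alpha$ is smaller than both $2$ and $N(p-2)/2$, ensuring that the focusing term wins at small scales.
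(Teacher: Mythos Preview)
Your proof is correct and follows essentially the same approach as the paper: both use the $L^{2}$-preserving boundary scaling $w_{\lambda}(y)=\lambda^{N/2}w_{0}(\lambda y)$ and conclude via the exponent comparison $\alpha<\min\{2,\,N(p-2)/2\}$. The only cosmetic difference is that the paper bounds the kinetic term by computing $\mathcal{I}$ on the explicit (non-harmonic) extension $e^{-mx}u(y)$, whereas you invoke Lemma~\ref{lem:betterminimizer} together with the Fourier-side inequality $\sqrt{m^{2}+|\xi|^{2}}-m\le |\xi|^{2}/(2m)$; both routes give the identical upper bound $\frac{\lambda^{2}}{4m}\int|\nabla w_{0}|^{2}$.
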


\begin{proof}
    Take any function $u \in C^{\infty}_{0}(\mathbb{R}^{N})$,
    $\abs{u}_{2}^{2} = M$, and let $w(x,y) = e^{-mx}u(y)$.
    
    Then 
    \begin{multline*}
	I(M) = \inf_{v \in \mathcal{M}_{M}} \mathcal{I}(v) \leq \mathcal{I}(w)\\
	= \frac{1}{2} \iint_{\mathbb{R}^{N+1}_{+}}
	\bigl(\abs{\partial_{x}w}^{2} + \abs{\nabla_{y}w}^{2} + m^{2}
	\abs{w}^{2} \bigr) \, dx \, dy - \frac{m}{2} \int_{\mathbb{R}^{N}} \abs{u}^{2} \, dy+
	G(u) \\
	= \frac{m}{4} \int_{\mathbb{R}^{N}} \abs{u}^{2}   \, dy+
	\frac{1}{4m} \int_{\mathbb{R}^{N}} \abs{\nabla_{y}u}^{2}  \, dy +
	\frac{m}{4} \int_{\mathbb{R}^{N}} \abs{u}^{2}  \, dy - \frac{m}{2}
	\int_{\mathbb{R}^{N}} \abs{u}^{2}  \, dy + G(u)\\
	= \frac{1}{4m} \int_{\mathbb{R}^{N}} \abs{\nabla_{y}u}^{2}  \, dy  +
	G(u)
    \end{multline*}
    where
    \begin{equation*}
        G(u) = \frac{\eta}{p} \int_{\mathbb{R}^{N}}  \abs{u}^{p} \, dy - 
	\frac{\sigma}{4} \int_{\mathbb{R}^{N}}  (W*\abs{u}^{2})\abs{u}^{2} \, dy
    \end{equation*}
    Take now, for $\lambda > 0$, $u_{\lambda}(y) =
    \lambda^{N/2}u(\lambda y)$ and $w_{\lambda}(x,y) =
    e^{-mx}u_{\lambda}(y) \in \mathcal{M}_{M}$ for all $\lambda > 0$.
    We find that
    \begin{multline*}
	I(M) \leq \inf_{\lambda > 0} \mathcal{I}(w_{\lambda})\\
	\leq \inf_{\lambda \in (0,1)} \biggl[\frac{1}{4m}
	\int_{\mathbb{R}^{N}} \abs{\nabla_{y}u_{\lambda}}^{2} +
	\frac{\eta}{p} \int_{\mathbb{R}^{N}} \abs{u_{\lambda}}^{p} -
	\frac{\sigma}{4} \int_{\mathbb{R}^{N}} (W*
	\abs{u_{\lambda}}^{2}) \abs{u_{\lambda}}^{2} \biggr] \\
	\leq \inf_{\lambda \in (0,1)} \biggl[\frac{\lambda^{2}}{4m}
	\int_{\mathbb{R}^{N}} \abs{\nabla_{y}u}^{2} +
	\frac{\eta\lambda^{N(\frac{p}{2} - 1)}}{p}
	\int_{\mathbb{R}^{N}} \abs{u}^{p} -
	\frac{\sigma\lambda^{\alpha}}{4} \int_{\mathbb{R}^{N}}
	(W* \abs{u}^{2}) \abs{u}^{2} \biggr]
    \end{multline*}
    and since $\alpha < N(\frac{p}{2} - 1) < 2$ we have that the
    infimum is negative.
\end{proof}

\begin{lemma}
\label{lem:inf-prop}
    For all $M > 0$ and $\beta \in (0,M)$ we have that $I(M) <
    I(M-\beta) + I(\beta)$.  Moreover $\frac{I(M)}{M}$ is a concave
    function of $M$ and hence $I(M)$ is a continuous function of $M$.
\end{lemma}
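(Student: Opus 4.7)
My plan is to reduce the problem to the trace space via Lemma~\ref{lem:betterminimizer}, writing $I(M)=\inf\{\mathcal{E}[w]:w\in H^{1/2}(\mathbb{R}^{N}),\ \int_{\mathbb{R}^{N}}|w|^{2}=M\}$, and then to establish two properties: (a) $f(M):=I(M)/M$ is concave on $(0,\infty)$, and (b) $f$ is in fact strictly decreasing. Property (a) gives continuity of $I$ on $(0,\infty)$ since concave functions are continuous on open intervals, and (b) yields the strict subadditivity by a one-line computation.

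For (a) I parameterize each admissible $w$ as $w=\sqrt{M}\,\phi$ with $\int|\phi|^{2}=1$, which gives
\[
\frac{\mathcal{E}[\sqrt{M}\,\phi]}{M} \;=\; \tfrac{1}{2}A(\phi)+\tfrac{\eta}{p}M^{p/2-1}B(\phi)-\tfrac{\sigma}{4}M\,C(\phi),
\]
where $A(\phi)=\int\phi(\sqrt{-\Delta+m^{2}}-m)\phi$, $B(\phi)=\int|\phi|^{p}$ and $C(\phi)=\int(W*|\phi|^{2})|\phi|^{2}$. Under the standing hypotheses we have $p\le 2N/(N-1)\le 4$ for $N\ge 2$, hence $p/2-1\in(0,1]$ and $M\mapsto M^{p/2-1}$ is concave on $(0,\infty)$. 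Since $B(\phi),C(\phi)\ge 0$, for each fixed $\phi$ the right-hand side is concave in $M$; the infimum $f(M)=\inf_{\|\phi\|_{2}=1}(\cdots)$ is therefore an infimum of concave functions and hence concave.

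For (b) I fix $\theta>1$ and a near-minimizer $w$ for $I(M)$ with $\mathcal{E}[w]<0$ (these exist because $I(M)<0$ by the previous lemma), and set $w_{\theta}(y)=w(y/\theta^{1/N})$, so $\int|w_{\theta}|^{2}=\theta M$. A direct change of variables (in Fourier variables for the quadratic form) shows that both $\int w_{\theta}(\sqrt{-\Delta+m^{2}}-m)w_{\theta}$ and $\int|w_{\theta}|^{p}$ are bounded above by $\theta$ times the corresponding quantity for $w$, whereas after substituting $(y,z)=\theta^{1/N}(y',z')$ the convolution term picks up an extra factor:
\[
\int(W*|w_{\theta}|^{2})|w_{\theta}|^{2} \;=\; \theta^{2}\!\iint W(\theta^{1/N}(y'-z'))|w(y')|^{2}|w(z')|^{2}\,dy'\,dz' \;\ge\; \theta^{2-\alpha/N}\!\int(W*|w|^{2})|w|^{2},
\]
where the last inequality applies~\eqref{eq:omogeneitaW} with $\lambda=\theta^{-1/N}\in(0,1)$. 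Since the quadratic and $L^{p}$ parts of $\mathcal{E}[w]$ are nonnegative, we also have $\tfrac{\sigma}{4}\int(W*|w|^{2})|w|^{2}\ge -\mathcal{E}[w]$. Combining,
\[
\mathcal{E}[w_{\theta}]\;\le\;\theta\mathcal{E}[w]-(\theta^{2-\alpha/N}-\theta)\,\tfrac{\sigma}{4}\!\int(W*|w|^{2})|w|^{2} \;\le\; \theta^{2-\alpha/N}\mathcal{E}[w].
\]
Passing to the infimum over $w$, using $\alpha<N$ (so $\theta^{2-\alpha/N}>\theta$) and $I(M)<0$, I conclude $I(\theta M)\le \theta^{2-\alpha/N}I(M)<\theta I(M)$; equivalently, $f$ is strictly decreasing on $(0,\infty)$. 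The strict subadditivity is then immediate: for $0<\beta<M$, monotonicity gives $f(\beta),f(M-\beta)>f(M)$, whence $I(\beta)+I(M-\beta)=\beta f(\beta)+(M-\beta)f(M-\beta)>Mf(M)=I(M)$.

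The main delicate point is the convolution scaling in step (b): one must apply~\eqref{eq:omogeneitaW} in the correct direction in order to produce the exponent $2-\alpha/N$ that is strictly larger than the exponent $1$ arising from the linear scaling of the mass, and then invoke the negativity of $\mathcal{E}[w]$ (guaranteed by the previous lemma) to absorb this excess factor into a strict improvement over $\theta\mathcal{E}[w]$.
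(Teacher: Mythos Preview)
Your proof is correct, and the concavity argument for $I(M)/M$ is essentially identical to the paper's. Where you diverge is in deriving the strict subadditivity. The paper uses the \emph{amplitude} scaling $v\mapsto\sqrt{\theta}\,v$ on $H^{1}(\mathbb{R}^{N+1}_{+})$: since the quadratic part is nonnegative by~\eqref{eq:positivita_quadratica} and $p\le 4$, one gets $\mathcal{I}(\sqrt{\theta}\,v)\le \theta^{2}\mathcal{I}(v)$, hence $I(\theta M)\le \theta^{2}I(M)<\theta I(M)$. You instead use the \emph{spatial dilation} $w_{\theta}(y)=w(y/\theta^{1/N})$ on the trace space and invoke the scaling hypothesis~\eqref{eq:omogeneitaW} on $W$ (together with $\alpha\le N/q\le 1<N$, so $2-\alpha/N>1$) to obtain $I(\theta M)\le \theta^{\,2-\alpha/N}I(M)<\theta I(M)$. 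Both routes yield the strict decrease of $f(M)=I(M)/M$, and your final deduction of $I(M)<I(\beta)+I(M-\beta)$ from that monotonicity is the same one-line computation as in the paper. The paper's choice is a bit more economical here, since it needs only $p\le 4$ and the nonnegativity of $W$, reserving assumption~\eqref{eq:omogeneitaW} for the separate proof that $I(M)<0$; your argument is equally valid but ties the subadditivity to that structural hypothesis on $W$.
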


\begin{proof}
    The subadditivity is a consequence of the fact that for all $\theta
    > 1$
    \begin{equation}
	\label{eq:Isublineare}
	I(\theta M) < \theta I(M) \quad \text{ which implies } \quad 
	\frac{1}{\theta}I(M) < I(M/\theta).
    \end{equation}
    Indeed, taking $\theta_{1} = \frac{M}{\beta}$ and $\theta_{2} =
    \frac{M}{M-\beta}$ we have that
    \begin{equation*}
	I(M) = \frac{\beta}{M}I(M) + \frac{M-\beta}{M} I(M) < I(\beta)
	+ I(M-\beta)
    \end{equation*}
    To prove that \eqref{eq:Isublineare} holds, we remark that for all
    $v \in \mathcal{M}_{M}$ and $\lambda = \theta^{1/2} > 1$ we have,
    thanks to \eqref{eq:positivita_quadratica}
    \begin{multline*}
	\mathcal{I}(\lambda v) = \frac{\lambda^{2}}{2}
	\bigl[\iint_{\mathbb{R}^{N+1}_{+}} (\abs{\nabla v}^{2} + m^{2}
	\abs{v}^{2} ) \, dx \, dy -
	m\int_{\mathbb{R}^{N}}\abs{\tra{v}}^{2} \, dy \, \bigr] \\
	+ \frac{\eta\lambda^{p}}{p} \int_{\mathbb{R}^{N}}
	\abs{\tra{v}}^{p} \, dy - \frac{\sigma\lambda^{4}}{4}
	\int_{\mathbb{R}^{N}} (W*\abs{\tra{v}}^{2}) \abs{\tra{v}}^{2}
	\, dy \leq \lambda^{4}\mathcal{I}(v)
    \end{multline*}
    Hence, since $I(M) < 0$
    \begin{multline*}
	I(\theta M) = \inf_{\abs{\tra{v}}_{2}^{2} =\theta M}
	\mathcal{I}(v) = \inf_{\abs{\tra{v}}_{2}= M}
	\mathcal{I}(\theta^{1/2}v) \\
	\leq \theta^{2} \inf_{\abs{\tra{v}}_{2}= M} \mathcal{I}(v) =
	\theta^{2} I(M) < \theta I(M) < I(M)
    \end{multline*}
    
    To prove the concavity of $\frac{I(M)}{M}$, we remark that
    \begin{equation*}
	\frac{I(M)}{M} = \frac{1}{M} \inf_{u \in \mathcal{M}_{M}}
	\mathcal{I}(u) = \inf_{u \in \mathcal{M}_{1}}
	\frac{\mathcal{I}(\sqrt{M}u)}{M}.
    \end{equation*}
    We now show that, for all $u \in \mathcal{M}_{1}$, $M \mapsto
    \mathcal{I}(\sqrt{M}u)/M$ is a concave function of $M$.  This will
    immediately prove that $I(M)/M$ is a concave function.
    
    Since
    \begin{multline*}
	\frac{\mathcal{I}(\sqrt{M}v)}{M} = \frac{1}{2} \bigl(
	\iint_{\mathbb{R}^{N+1}_{+}} (\abs{\nabla v}^{2} + m^{2}v^{2})
	\, dx \, dy - \int_{\mathbb{R}^{N}} m \abs{\tra{v}}^{2} \, dy  \bigr)\\
	+ \frac{\eta M^{p/2-1}}{p} \int_{\mathbb{R}^{N}}
	\abs{\tra{v}}^{p}  \, dy  - \frac{\sigma M}{4} \int_{\mathbb{R}^{N}}
	(W \ast \abs{\tra{v}}^{2})\abs{\tra{v}}^{2} \, dy
    \end{multline*}

    It is then immediate to check that the second derivative with respect to the variable $M$ is
    negative for all $M > 0$ when $p/2 < 2$ and that the function is
    linear when $p=4$ (namely the critical exponent for $N=2$).
\end{proof}

We are now ready to prove existence of a minimizer for the functional $\mathcal{I}$ on $\mathcal{M}_{M}$.

\begin{prop}
\label{prop:minimizer}
    For every $M > 0$ there is a function $u \in
    H^{1}(\mathbb{R}^{N+1}_{+})$ such that
    \begin{equation*}
        \begin{cases}
            \mathcal{I}(u) = I(M) &  \\
            \int_{\mathbb{R}^{N}} \abs{\tra{u}}^{2} \, dy = M & 
        \end{cases}
    \end{equation*}
    i.e.~a minimizer for $  \mathcal{I}$ in $\mathcal{M}_{M}$.
\end{prop}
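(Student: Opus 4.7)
The proof will use the concentration-compactness principle of P.-L.~Lions. Let $\{v_{n}\} \subset \mathcal{M}_{M}$ be a minimizing sequence, $\mathcal{I}(v_{n}) \to I(M)$. By Lemma~\ref{lem:coercitivita} it is bounded in $H^{1}(\mathbb{R}^{N+1}_{+})$ (note that in the critical case $\eta = 0$, $q = N$, the smallness of $M$ is tacitly in force), and by Lemma~\ref{lem:betterminimizer} we may replace each $v_{n}$ by the Poisson-type extension of its trace $w_{n} = \tra{v_{n}}$ without increasing $\mathcal{I}$, so that $\norm{v_{n}}^{2} = \norm{w_{n}}_{H^{1/2}}^{2}$. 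I would then apply Lions' lemma to the sequence of nonnegative $L^{1}$-densities $\abs{w_{n}}^{2}$, all of total mass $M$, leading to the usual trichotomy between vanishing, dichotomy, and compactness (up to $y$-translations).

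Vanishing would force $w_{n} \to 0$ in $L^{r}(\mathbb{R}^{N})$ for every $r \in (2, 2^{\sharp})$. In the subcritical regime, estimate~\eqref{eq:stima_conv} then makes the convolution term tend to zero; combined with the nonnegativity of the $L^{p}$-term and, via~\eqref{eq:positivita_quadratica}, of the quadratic part, this forces $\liminf \mathcal{I}(v_{n}) \geq 0$, contradicting $I(M) < 0$. In the critical regime ($\eta = 0$, $q = N$, $p = 2^{\sharp}$), the same conclusion is reached by using~\eqref{eq:stima_conv_critic} together with $M < M_{c}$ to strictly dominate the convolution term by the kinetic one. Dichotomy would split $w_{n}$ as $w_{n}^{(1)} + w_{n}^{(2)}$ with disjoint supports whose distance tends to infinity; the local contributions (quadratic and $L^{p}$) are asymptotically additive, while the crossed convolution terms $\int (W \ast \abs{w_{n}^{(1)}}^{2})\abs{w_{n}^{(2)}}^{2}$ tend to zero because $W(r) \to 0$ at infinity. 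We would then obtain $I(M) \geq I(\alpha) + I(M - \alpha)$ for some $\alpha \in (0,M)$, contradicting the strict subadditivity in Lemma~\ref{lem:inf-prop}.

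In the compactness alternative there exist $y_{n} \in \mathbb{R}^{N}$ so that $\tilde{w}_{n}(y) := w_{n}(y - y_{n})$ is tight. Extracting a weak $H^{1}$-limit $u$ of $\tilde{v}_{n}(x,y) := v_{n}(x, y - y_{n})$, tightness together with the $H^{1/2}$-bound gives strong $L^{r}$-convergence of $\tilde{w}_{n}$ to $\tra{u}$ for all $r \in [2, 2^{\sharp})$; in particular $u \in \mathcal{M}_{M}$. The $L^{p}$ and convolution terms pass to the limit (the latter via~\eqref{eq:stima_conv}), and by weak lower semicontinuity of the quadratic part $\mathcal{I}(u) \leq \liminf \mathcal{I}(\tilde{v}_{n}) = I(M)$, whence equality and $u$ is the desired minimizer. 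The main obstacle will be the dichotomy step: because the Poisson-type extension is nonlocal in $y$, producing extensions of the two pieces of $w_{n}$ whose Dirichlet energies add up asymptotically requires care. I would carry out the split directly at the trace level using a smooth $y$-partition of unity, reconstruct each piece as a Poisson-type extension, and exploit the equivalence $\norm{v}^{2} = \norm{\tra{v}}_{H^{1/2}}^{2}$ together with the localization properties of the $H^{1/2}$-norm for functions with remote supports to absorb the cross-term errors.
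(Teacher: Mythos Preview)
Your overall strategy coincides with the paper's: apply Lions' concentration--compactness to $\abs{w_n}^2$ on $\mathbb{R}^N$ and rule out vanishing and dichotomy using $I(M)<0$ and the strict subadditivity of Lemma~\ref{lem:inf-prop}. For vanishing the paper argues slightly differently, decomposing $W$ into a small-$L^\infty$ piece, a small-$L^s$ piece, and a bounded compactly supported piece to show the convolution term tends to zero directly; your route via the $H^{1/2}$ version of Lions' vanishing lemma is equally valid. Note, incidentally, that your ``subcritical'' argument based on the first inequality in~\eqref{eq:stima_conv} already covers $q=N$, since $4q/(2q-1)<2^\sharp$ remains strict; the separate critical-case argument you sketch is unnecessary (and, as written, would only give $\liminf \mathcal{I}(v_n)\ge -mM/2$, not $\ge 0$).

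The substantive difference is in the dichotomy step. You propose to split $w_n$ at the trace level into two pieces with diverging supports and then control the $H^{1/2}$ cross terms via localization. This can be made to work, but the nonlocality of the Gagliardo seminorm makes the cross-term estimate genuinely delicate: one must bound $\iint \abs{w^{(1)}(y)}\abs{w^{(2)}(z)}\abs{y-z}^{-N-1}\,dy\,dz$, which does not vanish merely because the supports are far apart (one of them is unbounded). The paper sidesteps this entirely by a Brezis--Lieb argument at the $H^1$ level: after translating so that mass $\alpha$ concentrates at the origin, it takes the weak $H^1(\mathbb{R}^{N+1}_+)$ limit $u$, sets $v_k=\tilde u_k-u$, and proves $\mathcal{I}(\tilde u_k)-\mathcal{I}(v_k)\to\mathcal{I}(u)$. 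Weak $H^1$ convergence gives the splitting of the quadratic part for free, the $L^p$ term is handled by the classical Brezis--Lieb lemma, and the convolution term by a tailored Brezis--Lieb analogue proved in Appendix~A. Continuity of $M\mapsto I(M)$ (Lemma~\ref{lem:inf-prop}) then closes the argument since $\abs{\tra{v_k}}_2^2\to M-\alpha$. The trade-off: your route is more geometric but needs an $H^{1/2}$ localization estimate you have not supplied; the paper's route is functional-analytic and replaces that estimate by a nonlinear Brezis--Lieb lemma for the convolution term.
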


\begin{proof}
    Let $\{ u_{n} \} \subset \mathcal{M}_{M}$ be a minimizing sequence.  Follows
    from lemma \ref{lem:betterminimizer} that also the sequence
    \begin{equation*}
	v_{n}(x,y) =
	\mathcal{F}^{-1}(e^{-x\sqrt{m^{2}+\abs{\cdot}^{2}}}
	\mathcal{F}(\tra{u_{n}}))
    \end{equation*}
    is a minimizing one.  From lemma \ref{lem:coercitivita} we deduce
    that $v_{n}$ is bounded in $H^{1}(\mathbb{R}^{N+1}_{+})$ and that
    $w_{n} \equiv \tra{v_{n}} = \tra{u_{n}}$ is bounded in
    $H^{1/2}(\mathbb{R}^{N})$ and   $\int_{\mathbb{R}^{N}} \abs{w_n}^{2} \, dy = M$.
    
    We will now use the concentration-compactness method of P.L. 
    Lions \cite{Lions84}.
    
    Namely, one of the following cases must occur
    \begin{description}
        \item[vanishing] for all $R > 0$ 
        \begin{equation*}
            \lim_{n\to+\infty} \sup_{z \in \mathbb{R}^{N}} 
	    \int_{z+B_{R}} \abs{w_{n}}^2  \, dy = 0 ; 
        \end{equation*}
    
        \item[dichotomy] for a subsequence $\{n_{k}\}$
	\begin{equation*}
	    \lim_{R\to+\infty} \lim_{k\to+\infty} \sup_{z \in
	    \mathbb{R}^{N}} \int_{z+B_{R}} \abs{w_{n_{k}}}^2  \, dy = \alpha
	    \in (0,M) ; 
        \end{equation*}
    
        \item[compactness] for all $\epsilon > 0$ there is $R > 0$, a 
	sequence $\{y_{k}\}$ and a subsequence $\{w_{n_{k}}\}$ such that
	\begin{equation*}
	    \int_{y_{k}+B_{R}} \abs{w_{n_{k}}}^2  \, dy \geq M - \epsilon. 
	\end{equation*}
    \end{description}
    Following the usual strategy we will show that vanishing and 
    dichotomy cannot occur.
    
    \begin{lemma}
    \label{ref:vanishing}
	If vanishing occurs, then
	\begin{equation*}
	    \int_{\mathbb{R}^{N}} (W * \abs{w_{n}}^{2}) \abs{w_{n}}^{2} \, dy\to 0 .
	\end{equation*}
    \end{lemma}
    
    \begin{proof}[Proof of lemma \ref{ref:vanishing}]
	Take any $\delta > 0$ and $R > 0$.  Let define $W_{\delta} = W
	\mathbb{I}_{\{W \geq \delta\}} $ and
	\begin{equation*}
	    W^{R}_{\delta} (|y|) = (W_{\delta}(|y|) - R) ^{+}
	    \mathbb{I}_{\{|y| < R\}} + W_{\delta}(|y|) \mathbb{I}_{\{|y|
	    \geq R\}},
	\end{equation*}
	where $\mathbb{I}_A$ is the characteristic function of the set
	$A$ .  Then it easy to check that $W \in
	L^{q}_{w}(\mathbb{R}^{N})$ implies that $W_{\delta} \in L^{s}
	(\mathbb{R}^{N}) $ for any $s \in [1,q)$ and moreover that
	$|W_{\delta}^{R}|_{s} \to 0$ as $R \to + \infty$ for any
	$\delta >0$.  Let us define also $\Gamma_{\delta}^{R} =
	W_{\delta} - W^{R}_{\delta} $.  It is clear that
	\begin{equation*}
	    0 \leq  (W-W_{\delta})(|y|) \leq \delta, \qquad 0 \leq
	    \Gamma^{R}_{d} (|y|) \leq R \qquad \forall y \in \mathbb{R}^{N}
	\end{equation*}
	Then for any given $\delta >0$ and $R>0$ and for some $s \geq
	N/2$ (which implies that $2 < 4s/(2s-1) \leq 2N/(N-1)$) we get
	from the Young inequality (also taking into account that by Sobolev embedding the
	sequence $\{w_{n}\}$ is bounded in $L^{p}$ for $p \in [2,2N/(N-1)]$ )
	\begin{multline*}
	    \int_{\mathbb{R}^{N}} (W * \abs{w_{n}}^{2}) \abs{w_{n}}^{2} \leq
	    \int_{\mathbb{R}^{N}} ((W-W_{\delta})* \abs{w_{n}}^{2})
	   \abs{ w_{n}}^{2} + \int_{\mathbb{R}^{N}} (W^{R}_{\delta}*
	    \abs{w_{n}}^{2}) \abs{ w_{n}}^{2} \\ + \int_{\mathbb{R}^{N}}
	    (\Gamma^{R}_{\delta}* \abs{w_{n}}^{2}) \abs{w_{n}}^{2} \\
	    \leq \delta \abs {w_n}_2^{4} + |W^{R}_{\delta}|_s
	    |w_n|_{4s/(2s-1)}^4 + R \iint_{\mathbb{R}^{N} \times
	    \mathbb{R}^{N}} \abs{w_{n}(y)}^{2} \abs{w_{n}(z)}^{2}
	    \mathbb{I}_{|z-y| \leq R} \, dy \, dz\\
	    \leq \delta M^2 + C |W^{R}_{\delta}|_s + R M \sup_{z \in
	    \mathbb{R}^{N}} \int_{z+B_{R}} \abs{w_{n}}^2 \, dy.
	\end{multline*}
	Now, letting first $n \to + \infty$, then $R \to + \infty$ and
	finally $\delta \to 0^{+} $ we conclude the proof of the lemma.
    \end{proof}
    \begin{lemma}
	\label{ref:dichotomy}
	If dichotomy occurs, then for any $\alpha \in (0,M)$ we have
	\begin{equation*}
	    I(M)  \geq  I(\alpha) + I(M- \alpha).
	\end{equation*}
    \end{lemma}
    \begin{proof}[Proof of lemma \ref{ref:dichotomy}]
	If dichotomy occurs then there is a sequence $\{n_{k}\}
	\subset \mathbb{N}$ such that for any $\epsilon >0$ there
	exists $R >0$ and a sequence $\{ z_{k} \} \subset
	\mathbb{R}^{N}$ such that
	\begin{equation*}
	    \lim_{k \to +\infty} \int_{z_{k}+B_{R}} \abs{w_{n_{k}}}^2
	     \, dy \in (\alpha - \epsilon,\alpha+\epsilon).
	\end{equation*}
	Let define $\tilde{w}_{k} = w_{n_{k}}(\cdot + z_k)$ and
	\begin{equation*}
	    \tilde{u}_{k}(x,y) =
	    \mathcal{F}^{-1}(e^{-x\sqrt{m^{2}+\abs{\cdot}^{2}}}
	    \mathcal{F}(\tilde{w}_{k} ))
	\end{equation*}
	so that $\{ \tilde{u}_{k} \}$ is a minimizing sequence for
	$\mathcal{I}$ on $\mathcal{M}_{M}$ such that
	\begin{equation*}
	    \lim_{k \to +\infty} \int_{B_{R}}
	    \abs{\gamma(\tilde{u}_{k}) }^2  \, dy \in (\alpha - \epsilon,
	    \alpha + \epsilon).
	\end{equation*}

	Since $\{ \tilde{u}_{k} \} $ is a bounded sequence in
	$H^{1}(\mathbb{R}^{N+1}_{+})$ then $\tilde{u}_{k} \to u$
	weakly in $H^{1}(\mathbb{R}^{N+1}_{+})$ and $\tilde{w}_{k} =
	\tra{\tilde{u}_{k}} \to w = \tra{u}$ weakly in $H^{1/2}$ and
	strongly in $L^{p}_{loc}(\mathbb{R}^{N})$ for $p \in [2,
	2N/(N-1))$.  Hence for all $\epsilon > 0$ there is $R > 0$
	such that
	\begin{equation*}
	    \int_{B_{R}} \abs{\tra{u} }^{2} \, dy = \lim_{k \to + \infty}
	    \int_{B_{R}} \abs{\gamma(\tilde{u}_{k}) }^2  \, dy \in (\alpha -
	    \epsilon, \alpha + \epsilon).
	\end{equation*}
	and 
	\begin{equation*}
	    \int_{\mathbb{R}^{N}}  \abs{\tra{u} }^{2} \, dy = \lim_{R \to 
	    +\infty} \int_{B_{R}} \abs{\tra{u} }^{2}  \, dy = \alpha.
	\end{equation*}

	We set $v_{k} = \tilde{u}_{k} - u$ and $\beta_{k}=
	\int_{\mathbb{R}^{N}} \abs{\tra{v_{k}}}^{2} \, dy $, by weak
	convergence in $L^{2}$ of the sequence $\{ \tra{\tilde{u}_{k}} \} $
	we get $ \lim_{k \to + \infty} \beta_k = M- \alpha$.
   
	Now we claim that 
	\begin{equation*}
	    I(M) = \lim_{k \to + \infty} \mathcal{I}(\tilde{u}_{k}) =
	    \mathcal{I}(u) + \lim_{k \to + \infty} \mathcal{I}(v_{k})
	    \geq I(\alpha) + \lim_{k \to + \infty } I(\beta_{k})
	\end{equation*}
	and by the continuity of the function $I$, as stated in lemma
	\ref{lem:inf-prop},  the lemma follows.
   
	Now let us prove the claim. We will show that 
	\begin{equation*}
	    \lim_{k \to +\infty} (\mathcal{I}(\tilde{u}_{k}) - 
	    \mathcal{I}(v_{k})) \to \mathcal{I}(u)
	\end{equation*}
	 Indeed by weak convergence in $H^1(\mathbb{R}^{N+1}_{+})$ we
	 immediately get
	 \begin{align*}
	     &\lim_{k \to + \infty} \left(
	     \iint_{\mathbb{R}^{N+1}_{+}} | \nabla \tilde{u}_{k} |^{2}
	     - \iint_{\mathbb{R}^{N+1}_{+}} | \nabla v_{k} |^{2}
	     \right)= \iint_{\mathbb{R}^{N+1}_{+}} | \nabla u |^{2} \\
	     &\lim_{k \to + \infty} \left(
	     \iint_{\mathbb{R}^{N+1}_{+}} | \tilde{u}_{k} |^{2} -
	     \iint_{\mathbb{R}^{N+1}_{+}} | v_{k} |^{2} \right)=
	     \iint_{\mathbb{R}^{N+1}_{+}} | u |^{2}
	 \end{align*} 
	 and by the Brezis-Lieb lemma
	 \begin{equation*}   
	     \lim_{k \to + \infty} \left( \int_{\mathbb{R}^{N}}
	     |\gamma(\tilde{u}_{k}) |^{p} - \int_{\mathbb{R}^{N}} | \gamma(
	     v_{k}) |^{p} \right)= \int_{\mathbb{R}^{N}} | \gamma(u) |^{p}
	 \end{equation*}
	 for $2 \leq p \leq  2N/(N-1)$. 
   
	 Hence we have to investigate the last nonlinear term.  We 
	 will show in the Appendix A that 
	 \begin{equation*}
	     \lim_{k \to + \infty} \left(\int_{\mathbb{R}^{N}} (W *
	     \abs{ \tilde{w}_{k} }^{2}) \abs{\tilde{w}_{k}}^{2} -
	     \int_{\mathbb{R}^{N}} (W * \abs{\gamma(v_{k} )}^{2})
	     \abs{\gamma(v_{k} )}^{2} \right) = \int_{\mathbb{R}^{N}}
	     (W * \abs{w}^{2}) \abs{w} ^{2}.
	 \end{equation*} 
	 from which the claim follows. 
     \end{proof}
     
     Finally, since we have ruled out both vanishing and dichotomy,
     then we may conclude that indeed {\it compactness} occurs, namely
     that for all $\epsilon > 0$ there is $R > 0$, a sequence $\{ y_{k} \}$
     and a subsequence $\{ w_{n_{k}} \}$ such that
     \begin{equation*}
	 \int_{y_{k}+B_{R}} \abs{w_{n_{k}}}^2 \, dy \geq M - \epsilon.
     \end{equation*}
     So let us define as
     before  $\tilde{w}_{k} = w_{n_{k}}(\cdot + y_k)$ and 
     \begin{equation*}
	\tilde{u}_{k}(x,y) =
	\mathcal{F}^{-1}(e^{-x\sqrt{m^{2}+\abs{\cdot}^{2}}}
	\mathcal{F}(\tilde{w}_{k} )).
    \end{equation*}
  
    Then $\tilde{u}_{k}$ is a minimizing sequence for $\mathcal{I}$ on
    $\mathcal{M}_{M}$ such that
    \begin{equation*}
	\int_{B_{R}} \abs{\gamma(\tilde{u}_{k}) }^2 \geq M - \epsilon.
   \end{equation*}
   Since $\{ \tilde{u}_{k} \} $ is a bounded sequence in
   $H^{1}(\mathbb{R}^{N+1}_{+})$ then $\tilde{u}_{k} \to u$ weakly in
   $H^{1}(\mathbb{R}^{N+1}_{+})$ and $\tilde{w}_{k} =
   \tra{\tilde{u}_{k}} \to w = \tra{u}$ weakly in $H^{1/2}$ and
   strongly in $L^{p}_{loc}(\mathbb{R}^{N})$ for $p \in [2,
   2N/(N-1))$.  As in the  proof of lemma \ref{ref:dichotomy} we deduce that $ \int_{\mathbb{R}^{N}} \abs{\tra{u} }^{2} = M$.
   
   Moreover we claim that as $k \to + \infty$
   \begin{equation*}
       \int_{\mathbb{R}^{N}} (W * \abs{ \tilde{w}_{k}}^{2}) \abs{\tilde{w}_{k}}
       ^{2} \to \int_{\mathbb{R}^{N}} (W * w^{2}) w^{2} .
   \end{equation*}
   Indeed, by the weak Young inequality and by H\"older inequality we
   have
   \begin{multline*}
       \left| \int_{\mathbb{R}^{N}} (W * \tilde{w}_{k} ^{2})
       \tilde{w}_{k} ^{2} - \int_{\mathbb{R}^{N}} (W * w^{2}) w^{2}
       \right| \leq \int_{\mathbb{R}^{N}} (W * (\tilde{w}_{k} ^{2} +
       w^2)) |\tilde{w}_{k} ^{2} - w^{2}| \\
       \leq C \|W\|_{q, w} |\tilde{w}_{k} ^{2} + w^2|_s |\tilde{w}_{k}
       ^{2} - w^2|_s \leq C |\tilde{w}_{k} - w|_{2s} \to 0
   \end{multline*}
   since $2 < 2s = 4q/(2q -1) < 2N/(N-1) $.

   Hence finally by weakly lower semicontinuity of $H^1$ and $ L^p$
   norms (the positive terms of the functional $\mathcal{I}$) we may
   conclude that
   \begin{equation*}
       \mathcal{I}(u) \leq \liminf_{k \to + \infty}
       \mathcal{I}(\tilde{u}_{k} ) = I(M)
   \end{equation*}
   which implies the $u$ is a minimizer for $ \mathcal{I}$ in
   $\mathcal{M}_{M}$.
\end{proof}
    
Now we collect all the results obtained to conclude the proof of
Theorem \ref{thm:Theo}.

\begin{proof}[Proof of Theorem \ref{thm:Theo}]
    By proposition \ref{prop:minimizer} there exists a function $u \in
    H^{1}(\mathbb{R}^{N+1}_{+})$ which minimizes $\mathcal{I}$ in
    $\mathcal{M}_{M}$.  Therefore $u$ can always be assumed
    nonnegative and, by lemma \ref{lem:betterminimizer}, of the form
    \begin{equation*}
	u(x,y) = \mathcal{F}^{-1}(e^{-x\sqrt{m^{2}+\abs{\cdot}^{2}}}
	\mathcal{F}(w))
    \end{equation*}
    where $w = \gamma(u) \in H^{1/2}(\mathbb{R}^{N})$.  
    
    If $W$ is a nonincreasing radial function, then $w$ can be assumed
    to be a radial nonincreasing function.  Indeed let $w^{*}$ be the
    spherically symmetric decreasing rearrangement of $w$ and define
    \begin{equation*}
	u^{*}(x,y) =
	\mathcal{F}^{-1}(e^{-x\sqrt{m^{2}+\abs{\cdot}^{2}}}
	\mathcal{F}(w^{*})).
    \end{equation*}
    Then $\mathcal{I}(u^{*}) = \mathcal{E}[w^{*}]$ (also this follows
    from lemma \ref{lem:betterminimizer}).  We can then use the
    properties of the spherically symmetric decreasing rearrangement,
    namely
    \begin{itemize}
	
	\item[(i)] $w^{*}$ is a nonnegative, radial function;

	\item[(ii)] $w \in L^{p}(\mathbb{R}^{N})$ implies $w^{*} \in
	L^{p}(\mathbb{R}^{N})$ and $|w^{*}|_{p} = |w|_{p}$;

	\item[(iii)] \emph{symmetric decreasing rearrangement
	decreases kinetic energy} (Lemma 7.17 in \cite{LiebLoss}), 
	that is
	\begin{equation*}
	    \int_{\mathbb{R}^{N}} w^{*} ( \sqrt{- \Delta + m^{2}} -m)
	    w^{*} \, dy \leq \int_{\mathbb{R}^{N}} w ( \sqrt{- \Delta
	    + m^{2}} -m) w \, dy ;
	\end{equation*}

	\item[(iv)] \emph{Riesz's rearrangement inequality} (see
	Theorem 3.7 in \cite{LiebLoss})), namely 
	\begin{equation*}
	    \int_{\mathbb{R}^{N}} (W \ast |w^{*}|^2) |w^{*}|^2 \, dy
	    \geq \int_{\mathbb{R}^{N}} (W \ast |w|^2) |w|^2 \, dy
	\end{equation*}
	if $W(y)=W^{*}(|y|)$ (in particular if $W$ is radial and 
	nonincreasing)
    \end{itemize}
    to deduce that
    \begin{equation*}
	\mathcal{I}(u^{*}) =\mathcal{E}[w^{*}] \leq \mathcal{E}[w] =
	\mathcal{I}(u) = I(M).
    \end{equation*}
  
    Moreover, by the theory of Lagrange multipliers, any minimizer $u
    \in H^{1}(\mathbb{R}^{N+1}_{+})$ of the functional $\mathcal{I}$
    on $\mathcal{M}_{M}$ is such that
    \begin{multline}
	\label{eq:deboleNeumann}
	\iint_{R^{N+1}_{+}} (\nabla u \nabla w + m^{2}u w) \, dx\, dy
	- \int_{\mathbb{R}^{N}} m \tra{u}\tra{w} \, dy + \mu
	\int_{\mathbb{R}^{N}} \tra{u}\tra{w} \, dy \\
	+ \eta \int_{\mathbb{R}^{N}} \abs{\tra{u}}^{p-2}\tra{u}
	\tra{w} \, dy - \sigma \int_{\mathbb{R}^{N}} (W \ast
	\abs{\tra{u}}^{2}) \tra{u}\tra{w} \, dy = 0\\
    \end{multline}
    for all $w \in H^{1}(\mathbb{R}^{N+1}_{+})$, i.e.~$u$ is a weak
    solution of the following nonlinear Neumann boundary condition
    problem
    \begin{equation}
    \label{eq:half_plane_eq}
	\begin{cases}
	    -\Delta u + m^{2}u =0 & \text{in } \mathbb{R}^{N+1}_{+} \\
	    - \frac{\partial u}{\partial x} + \mu u = m u - \eta
	    \abs{u}^{p-2}u + \sigma (W*|u|^{2}) u & \text{on }
	    \mathbb{R}^{N} = \partial\mathbb{R}^{N+1}_{+}
	\end{cases}
    \end{equation}
    for some Lagrange multiplier $\mu \in \mathbb{R}$.  To prove that
    $\mu > 0$ we take $w = u$ in \ref{eq:deboleNeumann} to get
    \begin{align*}
	0 &= \iint_{\mathbb{R}^{N+1}_{+}} (\abs{\nabla u}^{2} + m^{2}
	\abs{u}^{2}) \, dx\, dy - \int_{\mathbb{R}^{N}} m
	\abs{\tra{u}}^{2} \, dy + \mu \int_{\mathbb{R}^{N}}
	\abs{\tra{u}}^{2} \, dy \\
	&\qquad + \eta \int_{\mathbb{R}^{N}} \abs{\tra{u}}^{p} \, dy -
	\sigma \int_{\mathbb{R}^{N}} (W \ast \abs{\tra{u}}^{2})
	\abs{\tra{u}}^{2} \, dy \\
	&= 2 \mathcal{I}(u) + \mu \int_{\mathbb{R}^{N}}
	\abs{\tra{u}}^{2} \, dy 
	+ \eta( 1- \frac{2}{p}) \int_{\mathbb{R}^{N}}
	\abs{\tra{u}}^{p} \, dy \\
	&\qquad - \frac{\sigma}{2}
	\int_{\mathbb{R}^{N}} (W \ast \abs{\tra{u}}^{2})
	\abs{\tra{u}}^{2} \, dy.
    \end{align*}
    Since $\mathcal{I}(u) <0$ we have in particular that
    \begin{equation*}
	\frac{\eta}{p} \int_{\mathbb{R}^{N}} \abs{\tra{u}}^{p} \, dy <
	\frac{\sigma}{4} \int_{\mathbb{R}^{N}} (W \ast
	\abs{\tra{u}}^{2}) \abs{\tra{u}}^{2} \, dy
    \end{equation*}
    and hence, since $p \leq 2N/(N-1) \leq 4$, for $N \geq 2$, we get
    \begin{multline*}
	\mu \int_{\mathbb{R}^{N}} \abs{\tra{u}}^{2} \, dy \\
	= - 2 \mathcal{I}(u) - \eta( 1- \frac{2}{p})
	\int_{\mathbb{R}^{N}} \abs{\tra{u}}^{p} + \frac{\sigma}{2}
	\int_{\mathbb{R}^{N}} (W \ast \abs{\tra{u}}^{2})
	\abs{\tra{u}}^{2} \, dy \\
	> \eta ( \frac{4}{p} - 1) \int_{\mathbb{R}^{N}}
	\abs{\tra{u}}^{p} \, dy \geq 0.
    \end{multline*}

    Finally the regularity, the strictly positivity and the
    exponential decay at infinity of the weak nonnegative solutions of
    \eqref{eq:half_plane_eq} follow straightforwards from Theorems
    3.14 and 5.1 in \cite{CZNolasco2011}.
\end{proof}

\section{Appendix A}

We prove that
\begin{multline*}
    \int_{\mathbb{R}^{N}}| (W * w \gamma(v_{k} )) w \gamma(v_{k} ) | +
    \int_{\mathbb{R}^{N}} |(W * \gamma(v_{k} )^{2}) w^{2} |+
    \int_{\mathbb{R}^{N}} |(W * w \gamma(v_{k} ) ) w^2 | + \\
    + \int_{\mathbb{R}^{N}} |(W * \gamma(v_{k} )^2 ) w \gamma(v_{k} )
    |\to 0 \qquad \text{as} \ k \to + \infty.
\end{multline*}
as claimed in the proof of lemma \ref{ref:dichotomy}.  Indeed we have
the following result.
	
\begin{lemma}
    For any $w \in H^{1/2}(\mathbb{R}^{N})$ and for sequences $
    \{f_{n}, g_{n}, h_{n} \}$ bounded in $H^{1/2}(\mathbb{R}^{N})$ and
    such that $f_n \to 0$ in $L^{2}_{loc}$ we have
    \begin{equation*}
	\int_{\mathbb{R}^{N}} (W * |f_{n} g_{n}| ) |w h_{n} | \to 0
	\qquad \text{as} \ n \to + \infty.
    \end{equation*}
\end{lemma}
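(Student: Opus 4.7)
My plan is to combine the weak Young inequality \eqref{eq:weakYang} with a three-region decomposition that decouples the region where $f_n$ vanishes in $L^2$, the region where $w$ has small tail, and the region where $W$ itself is small. Specifically, I would apply weak Young with exponents $p = r = 2q/(2q-1)$ (so that $1/q + 1/p + 1/r = 2$); the associated Sobolev exponents $2p = 2r = 4q/(2q-1)$ and the H\"older conjugate $2p/(2-p) = 2q/(q-1)$ both lie in $[2,\, 2N/(N-1)]$ precisely because $q \geq N$, so all the relevant $L^{s}$-norms of $f_n$, $g_n$, $w$, $h_n$ are controlled by the Sobolev embedding $H^{1/2} \hookrightarrow L^{s}$.

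Fix $\epsilon > 0$. First I would split $w = w\chi_{B_{R'}} + w\chi_{B_{R'}^{c}}$, choosing $R'$ large enough that $|w\chi_{B_{R'}^{c}}|_{2r} < \epsilon$, which is possible since $w \in L^{2r}(\mathbb{R}^{N})$. Weak Young followed by H\"older then gives
\[
    \int_{\mathbb{R}^N} (W * |f_n g_n|)\, |w\chi_{B_{R'}^{c}}\, h_n|\, dy \leq C \|W\|_{q, w}\,|f_n|_{2p}\,|g_n|_{2p}\,|w\chi_{B_{R'}^{c}}|_{2r}\,|h_n|_{2r} \leq C_1\,\epsilon.
\]

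For the remaining piece $\int (W*|f_n g_n|)\,|w\chi_{B_{R'}}\,h_n|$, I would further split the convolution variable $z$ into $|z| \leq R'+\rho$ and $|z| > R'+\rho$, where $\rho$ is chosen so that $W(r) \leq \epsilon$ for all $r > \rho$ (possible since $W(r) \to 0$ at infinity). On the outer region $y \in B_{R'}$ and $|z| > R'+\rho$ force $|y-z| > \rho$, so $W(y-z) \leq \epsilon$ and the piece is bounded by $\epsilon\,|f_n|_2\,|g_n|_2\,|w|_2\,|h_n|_2 \leq C_2\,\epsilon$. On the inner region $|z| \leq R'+\rho$, weak Young combined with the sharpened H\"older split
\[
    \|f_n\chi_{B_{R'+\rho}}\, g_n\|_{p} \leq |f_n|_{L^{2}(B_{R'+\rho})}\,|g_n|_{2q/(q-1)}
\]
yields a bound proportional to $|f_n|_{L^{2}(B_{R'+\rho})}$, which tends to $0$ as $n\to\infty$ for each fixed $R',\rho$ by the $L^{2}_{loc}$-convergence hypothesis on $f_n$.

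Combining the three estimates, $\limsup_{n\to\infty} \int (W*|f_n g_n|)\,|w h_n| \leq (C_1 + C_2)\,\epsilon$; since $\epsilon$ is arbitrary, the limit is $0$. The main obstacle is the simultaneous juggling of the three independent parameters $(R', \rho, n)$ so that each error term is absorbed by the correct smallness mechanism, together with the verification that the H\"older exponent $2q/(q-1)$ needed to extract the $L^{2}$-factor on $f_n$ really does sit inside the Sobolev range $[2,\, 2N/(N-1)]$---an inequality equivalent to $q \geq N$, which is also what ties this estimate to the critical regime of Theorem \ref{thm:Theo}.
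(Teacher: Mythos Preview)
Your proof is correct and reaches the conclusion by a genuinely different decomposition than the paper's. The paper slices $W$ itself into three level-set pieces, writing $W = (W - W_{\delta}) + W_{\delta}^{R} + \Gamma_{\delta}^{R}$ where $W - W_{\delta} \leq \delta$ pointwise, $W_{\delta}^{R}$ has small $L^{N/2}$-norm as $R\to\infty$ (this uses that $W\in L^{q}_{w}$ forces $W_{\delta}\in L^{s}$ for $s<q$), and $\Gamma_{\delta}^{R}$ is compactly supported and bounded by $R$; it then applies strong Young on the second piece and an $L^{\infty}$--$L^{1}$ estimate on the third, sending $n\to\infty$, then $R\to\infty$, then $\delta\to 0$. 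You instead cut the \emph{integration domain}: first the $y$-variable via the $L^{2r}$-tail of $w$, then the $z$-variable via the pointwise decay $W(r)\to 0$, and you apply weak Young uniformly throughout. Your route is a bit more streamlined---no level-set gymnastics, no verification that $W_{\delta}\in L^{s}$---but it invokes the extra hypotheses of Theorem~\ref{thm:Theo} that $W$ is radial with $W(r)\to 0$ at infinity, whereas the paper's argument needs only $W\in L^{q}_{w}$. Both arguments ultimately rest on the same Sobolev arithmetic tied to $q\geq N$; your observation that the H\"older exponent $2q/(q-1)$ lands exactly at $2N/(N-1)$ when $q=N$ is the crux in either approach.
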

	
\begin{proof}
    It is convenient to introduce for any given $\delta >0$ and $R>0$,
    $W_{\delta} = W \mathbb{I}_{W \geq \delta} $ and
    \begin{equation*}
	W^{R}_{\delta} (y) = (W_{\delta} - R) ^{+} \mathbb{I}_{|y| <
	R} + W_{\delta} \mathbb{I}_{|y| \geq R} .
    \end{equation*}
  
    Then for $W \in L^{q}_{w}(\mathbb{R}^{N})$ we have $W_{\delta} \in
    L^{p} (\mathbb{R}^{N}) $ for any $ p \in [1,q)$ and moreover that
    $|W_{\delta}^{R} |_{p} \to 0$ as $R \to + \infty$ for any $\delta
    >0$.  Let introduce again also $\Gamma_{\delta}^{R} = W_{\delta} -
    W^{R}_{\delta} $.  Note that $\text{supp} \, \Gamma_{\delta}^{R}
    \subset B_{R}$ and $0 \leq \Gamma_{\delta}^{R} \leq R$.

    From Young inequality (with $p = N/2$, $r = 2p/(2p-1)= N/(N-1)$),
    H\"older inequality and Sobolev embedding we have
    \begin{equation}
	\label{eq:split}
	\begin{split}
	    \int_{\mathbb{R}^{N}} & (W * \abs{f_{n}g_{n}}) 
	    \abs{wh_{n}} \\
	    &\leq \int_{\mathbb{R}^{N}} ((W-W_{\delta})*
	    \abs{f_{n}g_{n}}) \abs{wh_{n}} 
	    + \int_{\mathbb{R}^{N}} (W^{R}_{\delta}*
	    \abs{f_{n}g_{n}}) \abs{wh_{n}} \\
	    &\qquad\qquad + \int_{\mathbb{R}^{N}}
	    (\Gamma^{R}_{\delta}* \abs{f_{n}g_{n}}) \abs{wh_{n}} \\
	    &\leq \delta |f_{n} g_{n}|_{1} |w h_{n} |_{1} +
	    |W^{R}_{\delta} |_{N/2} |f_{n} g_{n}| _{r}|w h_{n}| _{r} +
	    \int_{\mathbb{R}^{N}} (\Gamma_{\delta}^{R} * |f_{n} g_{n}|
	    ) |w h_{n} | \\
	    &\leq C (\delta + |W^{R}_{\delta} |_{N/2} ) +
	    \int_{\mathbb{R}^{N}} (\Gamma_{\delta}^{R} * |f_{n} g_{n}|
	    ) |w h_{n} | .\\
	\end{split}
\end{equation}
    First of all we claim that
    \begin{equation*}
	\int_{\mathbb{R}^{N}} (\Gamma_{\delta}^{R} * |f_{n} g_{n}| )
	|w h_{n} | \to 0 \qquad \text{as} \ n \to + \infty.
    \end{equation*}

    Indeed, for any $\epsilon >0$ we fix $R_{1} >0$ such that
    $|\mathbb{I}_{\mathbb{R}^{N} \setminus B_{1}} w|_{2} < \epsilon$,
    where $B_{1}= B_{R_{1}}$.  We introduce also $R_2 = R_{1} + R$ and
    $B_{2} = B_{R_{2}}$ so that for any $y \in B_{1} $ and $z \in
    \mathbb{R}^{N} \setminus B_{2}$, we have $|z-y| \geq R$ and hence
    $\Gamma_{\delta}^{R}(z-y) = 0$.

    Now we estimate the term as follows
    \begin{multline*}
	\int_{\mathbb{R}^{N}} (\Gamma_{\delta}^{R} * |f_{n} g_{n}| )
	|w h_{n} | = \int_{B_1} (\Gamma_{\delta}^{R} *
	(\mathbb{I}_{B_2}|f_{n} g_{n}|) ) |w h_{n} | +
	\int_{\mathbb{R}^{N} \setminus B_1} (\Gamma_{\delta}^{R} *
	|f_{n} g_{n}| ) |w h_{n} | \\
	\leq R |\mathbb{I}_{B_{2}} f_{n} g_{n}|_{1}
	|\mathbb{I}_{B_{1}} w h_{n}|_{1} + |\Gamma_{\delta}^{R} *
	(f_{n} g_{n}) |_{\infty}|\mathbb{I}_{\mathbb{R}^{N} \setminus
	B_{1}} h_{n}|_{2}|\mathbb{I}_{\mathbb{R}^{N} \setminus B_{1}}
	w|_{2} \\
	\leq R |g_{n}|_{2} |h_{n}|_{2} ( |\mathbb{I}_{B_{2}} f_{n}
	|_{2} |w|_{2}+ R |f_{n} |_{2} |\mathbb{I}_{\mathbb{R}^{N}
	\setminus B_{1}} w|_{2} ) \\
	\leq C R ( |\mathbb{I}_{B_{2}} f_{n} |_{2} +
	|\mathbb{I}_{\mathbb{R}^{N} \setminus B_{1}} w|_{2} ) \\
    \end{multline*}
    and since $f_n \to 0$ as $n \to + \infty$ in $L^{2}(B_2)$ the
    claim is proved.
	
    Then we conclude the proof of the lemma sending first $n \to +
    \infty$ , then $R \to + \infty$ and finally $\delta \to 0$ in
    \eqref{eq:split}.
\end{proof}

\providecommand{\noopsort}[1]{} \providecommand{\cprime}{$'$}
  \providecommand{\scr}{} \def\ocirc#1{\ifmmode\setbox0=\hbox{$#1$}\dimen0=\ht0
  \advance\dimen0 by1pt\rlap{\hbox to\wd0{\hss\raise\dimen0
  \hbox{\hskip.2em$\scriptscriptstyle\circ$}\hss}}#1\else {\accent"17 #1}\fi}
\providecommand{\bysame}{\leavevmode\hbox to3em{\hrulefill}\thinspace}
\providecommand{\MR}{\relax\ifhmode\unskip\space\fi MR }
\providecommand{\MRhref}[2]{  \href{http://www.ams.org/mathscinet-getitem?mr=#1}{#2}
}
\providecommand{\href}[2]{#2}

\end{document}